\newcommand{\qbc}[2]{\left[\genfrac{}{}{0pt}{}{#1}{#2}\right]}
\numberwithin{equation}{section}
\theoremstyle{plain}
\newtheorem{theorem}{Theorem}[section]
\newtheorem{lemma}[theorem]{Lemma}
\newtheorem{corollary}[theorem]{Corollary}
\newtheorem{example}[theorem]{Example}
\title{\bf Enumeration of row-increasing tableaux of two-row skew shapes}
\author{Xiaomei Chen\\
\small School of Mathematics and Computational Science\\[-0.8ex]
\small Hunan University of Science and
Technology\\[-0.8ex]
\small Xiangtan 411201, China\\
\small\tt xmchen@hnust.edu.cn\\
}
\date{
\small Mathematics Subject Classifications: 05A15,~05E05}
\begin{document}

\maketitle

\begin{abstract}
In this paper, we firstly extend a result of Bonin, Shapiro and Simion by giving the distribution of the major index over generalized Schr\"{o}der paths. Then by providing a bijection between generalized Schr\"{o}der paths and row-increasing tableaux of skew shapes with two rows, we obtain the distribution of the major index and the amajor index over these tableaux, which extends a result of Du, Fan and Zhao. We also generalize a result of Pechenik and give the distribution of the major index over increasing tableaux of skew shapes with two rows. Especially, a bijection from row-increasing tableaux with shape $(n,m)$ and maximal value $n+m-k$ to standard Young tableaux with shape $((n-k+1,m-k+1,1^k)/(1^2))$ is obtained.

    \bigskip\noindent \textbf{Keywords:} major index, generalized Schr\"{o}der path, row-increasing tableau, increasing tableau, jeu de taquin
\end{abstract}

\section{Introduction}
A \emph{generalized Schr\"{o}der path} is a lattice path with steps (1,0), (1,1) and (0,1) that never goes above the diagonal line $y=x$. We use $\mathrm{Sch}_k(r;n,m)$ to denote the set of generalized Schr\"{o}der paths from $(r,0)$ to $(n,m)$ with $k$ (1,1) steps. We will also denote $\mathrm{Sch}_0(r;n,m)$ as $\mathrm{Cat}(r;n,m)$, which is the set of all \emph{generalized Catalan paths} from $(r,0)$ to $(n,m)$.

In the following, we use $E$(East), $D$(Diagonal) and $N$(North) to denote the three steps (1,0), (1,1) and (0,1) respectively. In this way, we can represent a generalized Schr\"{o}der path as a word over the alphabet set $\{E, D, N\}$, and define its major index as follows. Given a word $P=p_1p_2\cdots p_{\ell}$ which is a permutation of a multiset whose elements are totally ordered, we say that $i$ is a \emph{descent} of $P$ if $p_i>p_{i+1}$. The \emph{descent set} $D(P)$ is the collection of all descents of $P$. The \emph{major index} of $P$ is defined by $\mathrm{maj}(P):=\sum_{i\in D(P)}i$.

Our first result gives the distribution of the major index over generalized Schr\"{o}der paths, which generalizes a result of Bonin, Shapiro and Simion \cite[Thm. 4.3]{Bonin-Shapiro-Simion}.
\begin{theorem}\label{thm:schroder}
Let $r,n,m$ and $k$ be positive integers with $\mathrm{Sch}_k(r;n,m)\neq \emptyset$.
If $E>N$, then we have
\begin{equation*}
\begin{aligned}
&\sum_{P\in \mathrm{Sch}_k(r;n,m)}q^{\mathrm{maj}(P)}\\
&=\qbc{n+m-r-k}{k}\bigg\{\qbc{n+m-r-2k}{n-r-k}-\qbc{n+m-r-2k}{n-k+1}\bigg\};
\end{aligned}
\end{equation*}
if $E<N$, then we have
\begin{equation*}
\begin{aligned}
&\sum_{P\in \mathrm{Sch}_k(r;n,m)}q^{\mathrm{maj}(P)}\\
&=\qbc{n+m-r-k}{k}\bigg\{\qbc{n+m-r-2k}{n-r-k}-q^{r+1}\qbc{n+m-r-2k}{n-k+1}\bigg\}.
\end{aligned}
\end{equation*}
\end{theorem}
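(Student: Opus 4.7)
My plan is to split the proof into two steps: first, use a shuffle identity to factor out the contribution of the $D$-steps; second, evaluate the remaining two-letter Catalan factor by a MacMahon-style reflection argument.

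\emph{Step 1 (shuffle factorization).} Write each $P \in \mathrm{Sch}_k(r;n,m)$ as a word of length $\ell = n+m-r-k$ over $\{E,D,N\}$. Erasing the $D$-steps produces a Catalan path $Q \in \mathrm{Cat}(r;n-k,m-k)$, since each $D$-step shifts $(x,y)$ by $(1,1)$ and hence preserves the constraint $y \le x$; conversely, any shuffle of $Q$ with the constant word $D^k$ yields a valid Schr\"oder path. Since the alphabets $\{E,N\}$ and $\{D\}$ are disjoint, Stanley's $q$-shuffle theorem for the major index gives
\[
\sum_{P \text{ a shuffle of } Q \text{ and } D^k} q^{\mathrm{maj}(P)}
\;=\;
\qbc{\ell}{k}\, q^{\mathrm{maj}(Q)}
\]
independently of the total order chosen on $\{E,D,N\}$ (using $\mathrm{maj}(D^k) = 0$). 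Summing over $Q$ yields
\[
\sum_{P \in \mathrm{Sch}_k(r;n,m)} q^{\mathrm{maj}(P)}
\;=\;
\qbc{n+m-r-k}{k}
\sum_{Q \in \mathrm{Cat}(r;n-k,m-k)} q^{\mathrm{maj}(Q)}.
\]

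\emph{Step 2 (Catalan factor via reflection).} By MacMahon's theorem, the unconstrained sum $\sum_w q^{\mathrm{maj}(w)}$ over all $\{E,N\}$-words with $n-r-k$ letters $E$ and $m-k$ letters $N$ equals $\qbc{n+m-r-2k}{n-r-k}$. To extract the Catalan subsum I subtract the contribution of ``bad'' words (those that touch $y = x+1$). Each bad word reflects across $y = x+1$ at its first-touch point, swapping $E \leftrightarrow N$ in the prefix, and this yields a bijection onto all unconstrained words from $(-1, r+1)$ to $(n-k, m-k)$; MacMahon applied to the latter set gives $\qbc{n+m-r-2k}{n-k+1}$. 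The aggregate effect of the reflection on the $\mathrm{maj}$ generating function is the identity when $E > N$ and multiplication by $q^{r+1}$ when $E < N$. Combined with Step 1, this produces the two formulas of the theorem.

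\emph{Main obstacle.} Step 1 is a clean application of the $q$-shuffle theorem. Step 2 is the technical heart, because the reflection is \emph{not} pointwise $\mathrm{maj}$-preserving; I would verify the required identity at the generating-function level by stratifying bad words according to their first-touch step and tracking how descent positions inside the reflected prefix translate to descent positions in the original word. The $q^{r+1}$ shift in the $E < N$ case arises because at a first-touch point $(j, j+1)$ the prefix contains exactly $r+1$ more $N$-letters than $E$-letters, and this imbalance precisely governs the aggregated descent-position shift induced by the swap.
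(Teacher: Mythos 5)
Your Step 1 coincides exactly with the paper's proof of Theorem \ref{thm:schroder}: the paper views each path in $\mathrm{Sch}_k(r;n,m)$ as a shuffle of a path in $\mathrm{Cat}(r;n-k,m-k)$ with $D^k$ and applies the Garsia--Gessel shuffle theorem (Theorem \ref{lem:gessel}, extended to multisets by standardization) to extract the factor $\qbc{n+m-r-k}{k}$. That part of your argument is correct and complete, including the observation that the answer does not depend on where $D$ sits in the linear order.

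The gap is in Step 2. The paper does not prove the Catalan factor by reflection: it quotes it as a special case of Krattenthaler--Mohanty (Theorem \ref{thm:catlan}) and, independently, rederives it by transporting the problem to $\mathrm{SYT}((n,m)/(r))$ and applying the Chen--Stanley specialization formula (Theorem \ref{thm:SHLF}) together with the $q$-hook length formula and the $q$-hockey stick identity \eqref{equ:q-hockey}. Your reflection argument rests entirely on the claim that the first-touch reflection across $y=x+1$ carries the maj generating function of the bad words onto $\qbc{n+m-r-2k}{n-k+1}$ when $E>N$ and onto $q^{r+1}\qbc{n+m-r-2k}{n-k+1}$ when $E<N$. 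You correctly flag that the reflection is not pointwise maj-preserving, but the proposed remedy (stratify by first-touch step and track how descents in the reflected prefix correspond to descents in the original) is precisely the entire content of the lemma and is not carried out: swapping $E\leftrightarrow N$ in a prefix changes the descent set both inside the prefix and at the junction with the suffix, and no computation is offered showing that the aggregate effect is multiplication by $1$ (resp.\ $q^{r+1}$). The heuristic that the prefix has $N$-excess $r+1$ at the touch point explains the binomial coefficient, not the power of $q$. As written, Step 2 is a plausibility sketch of a true statement rather than a proof. To close the gap, either cite Krattenthaler--Mohanty for the Catalan factor as the paper does, or reproduce the paper's tableau-theoretic derivation; if you insist on a reflection proof, you must supply the descent bookkeeping, which is a nontrivial argument in its own right.
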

It is obvious that the number of generalized Schr\"{o}der paths from $(n_1,m_1)$ to $(n_2,m_2)$ is equal to the number of those from $(n_1-m_1,0)$ to $(n_2-m_1,m_2-m_1)$. Therefore by setting $q=1$ in the above result, we obtain a result of Krattenthaler \cite[Thm. 10.8.1]{Krattenthaler1} which counts the number of generalized Schr\"{o}der paths between any two given lattice points.

 Generalized Schr\"{o}der paths are closely related to row-increasing tableaux. Here a \emph{row-increasing tableau} is defined as the transpose of a semistandard Young tableau such that the set of its entries is an initial segment of $\mathds{Z}_{>0}$, and an \emph{increasing tableau} is a row-increasing tableau with its columns strictly increasing. (Note that the present usage of row-increasing tableau follows the definition in \cite{Du-Fan-Zhao}, which conflicts with earlier usage in \cite{Buontempo}.) For partitions $\lambda$ and $\mu$ with $\lambda \supseteq \mu$, we denote by $\mathrm{RInc}_k(\lambda/\mu)$ (resp. $\mathrm{Inc}_k(\lambda/\mu)$) the set of row-increasing (resp. increasing) tableaux with shape $\lambda/\mu$ and maximal value $|\lambda/\mu|-k$, and denote by $\mathrm{SYT}(\lambda/\mu)$ the set of standard Young tableaux of shape $\lambda/\mu$. Then we have $\mathrm{SYT}(\lambda/\mu)=\mathrm{RInc}_0(\lambda/\mu)=\mathrm{Inc}_0(\lambda/\mu)$. Figure \ref{fig:RIncandInc} shows a row-increasing tableau $T_1\in \mathrm{RInc}_2((4,3)/(1))$ and an increasing tableau $T_2\in\mathrm{Inc}_2((4,3)/(1))$.
\begin{figure}[ht]
\begin{minipage}{0.48\linewidth}
\centering
\begin{tikzpicture}[scale=0.6]
\draw (0,1.5) node{$T_1:$};
\foreach \x in {2,...,4}
 \foreach \y in {1}
 {
  \draw (\x,\y) +(-.5,-.5) rectangle ++(.5,.5);
 }
 \foreach \x in {3,...,5}
 \foreach \y in {2}
 {
  \draw (\x,\y) +(-.5,-.5) rectangle ++(.5,.5);
 }
 \draw[red] (3,2) node{2};
 \draw (4,2) node{3};
 \draw (5,2) node{4};
 \draw[blue] (2,1) node{1};
 \draw[blue] (3,1) node{2};
 \draw[blue] (4,1) node{3};
\end{tikzpicture}
\end{minipage}
\begin{minipage}{0.48\linewidth}
\centering
\begin{tikzpicture}[scale=0.6]
\draw (0,1.5) node{$T_2:$};
\foreach \x in {2,...,4}
 \foreach \y in {1}
 {
  \draw (\x,\y) +(-.5,-.5) rectangle ++(.5,.5);
 }
 \foreach \x in {3,...,5}
 \foreach \y in {2}
 {
  \draw (\x,\y) +(-.5,-.5) rectangle ++(.5,.5);
 }
 \draw (3,2) node{1};
 \draw[red] (4,2) node{2};
 \draw[red] (5,2) node{3};
 \draw[blue] (2,1) node{1};
 \draw (3,1) node{3};
 \draw (4,1) node{4};
\end{tikzpicture}
\end{minipage}
\caption{A row-increasing tableau $T_1\in \mathrm{RInc}_2((4,3)/(1))$, and an increasing tableau $T_2\in\mathrm{Inc}_2((4,3)/(1))$, where descents of $T_i$ are colored red, and ascents of $T_i$ are colored blue.}
\label{fig:RIncandInc} 
\end{figure}

A \emph{descent} of a tableau $T$ is any instance of $i$ followed by an $i+1$ in a lower row of $T$, and we define the \emph{descent set} $D(T)$ to be the set of all descents of $T$. The \emph{major index} of $T$ is defined by $\mathrm{maj}(T)=\sum_{i\in D(T)} i$. Similarly, an \emph{ascent} of a tableau $T$ is defined in \cite{Du-Fan-Zhao} as any instance of $i$ followed by an $i+1$ in a higher row of $T$, and the \emph{amajor index} $\mathrm{amaj}(T)$ is the sum of all ascents of $T$.

Du, Fan and Zhao \cite[Thm. 3 and 4]{Du-Fan-Zhao} gave the distribution of the major and amajor index over $\mathrm{RInc}_k((n,n))$. Moreover, they noted that their results just differ from those of Bonin, Shapiro and
Simion \cite[Thm. 4.3]{Bonin-Shapiro-Simion} by the factor $q^{k(k-1)/2}$, and asked whether there is some simple explanation on these relations.

Motivated by the results of Du, Fan and Zhao, we introduce the concept of \emph{diagonal-reverse labelling} for generalized Schr\"{o}der paths. By constructing a bijection between $\mathrm{RInc}_k((n,m)/(r))$ and $\mathrm{Sch}_k(r;n,m)$, we obtain the distribution of the major and amajor index over $\mathrm{RInc}_k((n,m)/(r))$, which answers the question of Du, Fan and Zhao in a more generalized form.
\begin{theorem}\label{thm:RInc}
For positive integers $r,n,m$ and $k$ with $\mathrm{RInc}_k((n,m)/(r))\neq \emptyset$, we have
\begin{small}
\begin{equation*}
\begin{aligned}
&\sum_{T\in \mathrm{RInc}_k((n,m)/(r))}q^{\mathrm{maj}(T)}\\
&=q^{\frac{k(k-1)}{2}}\qbc{n+m-r-k}{k}\bigg\{\qbc{n+m-r-2k}{n-r-k}-\qbc{n+m-r-2k}{n-k+1}\bigg\},
\end{aligned}
\end{equation*}
\end{small}
and
\begin{small}
\begin{equation*}
\begin{aligned}
&\sum_{T\in \mathrm{RInc}_k((n,m)/(r))}q^{\mathrm{amaj}(T)}\\
&=q^{\frac{k(k-1)}{2}}\qbc{n+m-r-k}{k}\bigg\{\qbc{n+m-r-2k}{n-r-k}-q^{r+1}\qbc{n+m-r-2k}{n-k+1}\bigg\}.
\end{aligned}
\end{equation*}
\end{small}
\end{theorem}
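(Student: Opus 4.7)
The plan is to deduce Theorem~\ref{thm:RInc} from Theorem~\ref{thm:schroder} via an explicit bijection $\phi \colon \mathrm{RInc}_k((n,m)/(r)) \to \mathrm{Sch}_k(r;n,m)$ combined with a careful statistic comparison that produces the extra $q^{\binom{k}{2}}$ factor.

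First I would construct $\phi$ as follows. Given a row-increasing tableau $T$ with entries $1, 2, \ldots, n+m-r-k$, define the word $\phi(T) = w_1 w_2 \cdots w_{n+m-r-k}$ over $\{E, N, D\}$ by setting $w_v$ equal to $E$, $N$, or $D$ according to whether the value $v$ appears only in the top row of $T$, only in the bottom row, or in both rows. The column-weakly-increasing property of $T$ translates exactly to the $y \le x$ condition on the corresponding lattice path, so $\phi(T) \in \mathrm{Sch}_k(r;n,m)$. Invertibility is immediate by reading letters in sequence and placing the corresponding values into the indicated rows; in particular, the $k$ doubled values of $T$ correspond precisely to the $k$ diagonal steps of $\phi(T)$.

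Next I would introduce the \emph{diagonal-reverse labelling} of a Schr\"oder path $P$: label the $k$ diagonal steps of $P$ by $k-1, k-2, \ldots, 0$ reading from left to right, so that the labels sum to $\binom{k}{2}$. Using this labelling, the two distributional identities to prove are
\[
\sum_{T \in \mathrm{RInc}_k((n,m)/(r))} q^{\mathrm{maj}(T)} = q^{\binom{k}{2}} \sum_{P \in \mathrm{Sch}_k(r;n,m)} q^{\mathrm{maj}_{E>N}(P)},
\]
\[
\sum_{T \in \mathrm{RInc}_k((n,m)/(r))} q^{\mathrm{amaj}(T)} = q^{\binom{k}{2}} \sum_{P \in \mathrm{Sch}_k(r;n,m)} q^{\mathrm{maj}_{N>E}(P)},
\]
where each path major index is taken with respect to a total ordering in which $D$ is the smallest letter and the subscript indicates the relative order of $E$ and $N$. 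Once these identities are proved, Theorem~\ref{thm:RInc} follows at once from Theorem~\ref{thm:schroder} applied in the two cases $E>N$ and $E<N$.

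The main obstacle is establishing the distributional identities. A case analysis of adjacent letter pairs in $\phi(T)$ shows that tableau descents correspond to pairs in $\{(E,N),(E,D),(D,N),(D,D)\}$, while path descents under $E>N>D$ correspond to $\{(E,N),(E,D),(N,D)\}$; hence $(D,D)$-pairs are tableau descents that are not path descents, while $(N,D)$-pairs are path descents that are not tableau descents. The diagonal-reverse labelling is designed precisely to record and redistribute these pairwise discrepancies so that summing over all Schr\"oder paths produces, on the generating-function side, the clean multiplicative factor $q^{\binom{k}{2}}$ compared with the unlabelled Schr\"oder distribution. The ascending case is handled symmetrically using the ordering $N>E>D$, completing the reduction to Theorem~\ref{thm:schroder}; this also supplies the conceptual explanation of the $q^{k(k-1)/2}$ factor asked for by Du, Fan and Zhao.
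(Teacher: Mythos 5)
Your bijection $\phi$ is the paper's $\varphi$, and your overall plan (transfer the statistics to Schr\"oder paths, extract a factor $q^{\binom{k}{2}}$ from the diagonal steps, then quote Theorem~\ref{thm:schroder}) is also the paper's plan. The gap is exactly at the step you call ``the main obstacle'': the two displayed distributional identities are never proved. You correctly observe that under $E>N>D$ the tableau descents and the path descents disagree on the pairs $(D,D)$ and $(N,D)$, but the claim that the labelling ``records and redistributes these pairwise discrepancies so that summing over all Schr\"oder paths produces the clean multiplicative factor $q^{\binom{k}{2}}$'' is an assertion, not an argument: no involution, bijection, or computation is offered to show the discrepancies cancel in the sum. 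Worse, the mechanism cannot work in the form you state it. Keeping $D$ as the \emph{smallest} letter, even after replacing the $k$ diagonal steps by distinct labels decreasing from left to right, still leaves every adjacent pair $(N,D)$ a descent of the labelled word but not of the tableau, and every pair $(D,N)$ a descent of the tableau but not of the labelled word, so there is no term-by-term identity available to sum. (Indeed, already for $\mathrm{RInc}_2((3,2))$ the tableau with second row $1,3$ has $\mathrm{maj}(T)=2$ while $\binom{2}{2}+\mathrm{maj}_{E>N>D}(DED)=3$.) Finally, the factor $q^{\binom{k}{2}}$ does not arise because ``the labels sum to $\binom{k}{2}$''; the major index is a sum of descent positions, not of step labels.

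The repair, which is what the paper's Lemma~\ref{lem:labelledschro} does, is to place $D$ \emph{between} $E$ and $N$: take $E>D>N$ for $\mathrm{maj}$ and $N>D>E$ for $\mathrm{amaj}$, and label the diagonal steps in decreasing order from left to right (the diagonal-reverse labelling $w$). A pair-by-pair check then gives the exact equality of descent sets $D(T)=D(w(\varphi(T)))$: for $E>D>N$ the descents of $w(\varphi(T))$ are precisely the positions of the pairs $(E,N)$, $(E,D)$, $(D,N)$, $(D,D)$, and these are precisely the descents of $T$. One then regards the labelled path as a shuffle of a path in $\mathrm{Cat}(r;n-k,m-k)$ with the decreasing word $D_kD_{k-1}\cdots D_1$, whose major index is $\binom{k}{2}$, and Garsia--Gessel (Theorem~\ref{lem:gessel}) yields the factor $\qbc{n+m-r-k}{k}q^{\binom{k}{2}}$ multiplying the Catalan generating function of Theorem~\ref{thm:catlan}. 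Your two displayed identities are in fact true---they are equivalent to the theorem---but your proposal does not establish them.
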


With a bijection between $\mathrm{Inc}_k((n,n))$ and $\mathrm{SYT}((n-k,n-k,1^k))$ that preserves the major index,  Pechenik \cite[Thm. 1.1]{Pechenik} obtained the distribution of the major index over $\mathrm{Inc}_k((n,n))$ by applying the well-known $q-$hook length formula \cite[Cor. 7.21.5]{Stanley}:
\begin{equation}\label{equ:HLF}
\sum_{T\in{\mathrm{SYT}(\lambda)}}q^{\mathrm{maj}(T)}=\frac{q^{b(\lambda)}[n]!}{\prod_{u\in \lambda}[h(u)]}.
\end{equation}

 By extending the above bijection to skew shapes, we generalize Pechenik's results to $\mathrm{Inc}_k((n,m)/(r))$ as follows.
\begin{theorem}\label{thm:Inc}
For positive integers $r,n,m$ and $k$ with $\mathrm{Inc}_k((n,m)/(r))\neq \emptyset$, we have
\begin{small}
\begin{equation*}
\begin{aligned}
\sum_{T\in\mathrm{Inc}_k((n,m)/(r))}&q^{\mathrm{maj}(T)}=q^{\frac{k(k-1)}{2}}\qbc{n+m-k-r}{k}\\
&\bm{\cdot}\bigg(\qbc{n+m-2k-r}{m-k}-\frac{q^n[k]+[n][m-r]}{[n][n+1]}\qbc{n+m-2k-r}{n-k}\bigg).
\end{aligned}
\end{equation*}
\end{small}
\end{theorem}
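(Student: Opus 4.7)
The strategy is to generalize Pechenik's argument for the straight-shape case $\mathrm{Inc}_k((n,n))$ to the skew case $\mathrm{Inc}_k((n,m)/(r))$, proceeding via a bijection onto a tractable skew SYT set followed by a $q$-enumeration.

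\textbf{Step 1 (Bijection).} I would first construct a major-index-preserving bijection
\[
\Phi : \mathrm{Inc}_k((n,m)/(r)) \longrightarrow \mathrm{SYT}\bigl((n-k,\ m-k,\ 1^k)/(r)\bigr)
\]
extending Pechenik's bijection from the balanced case. The key structural observation is that a tableau $T \in \mathrm{Inc}_k((n,m)/(r))$ contains exactly $k$ values appearing twice, each with one copy in row $1$ and one strictly to its right in row $2$ (forced by strict column increase). Following Pechenik's construction---routing the data associated to the $k$ repeats into the length-$k$ vertical strip attached below column $1$ of the target shape, while keeping the singleton data in the two-row body---one obtains a well-defined map $\Phi$, and a descent-bookkeeping argument shows that $\Phi$ is a bijection with $\mathrm{maj}(\Phi(T)) = \mathrm{maj}(T)$.

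\textbf{Step 2 ($q$-enumeration).} It then remains to compute
\[
\sum_{U \in \mathrm{SYT}((n-k,\ m-k,\ 1^k)/(r))} q^{\mathrm{maj}(U)}.
\]
The target shape decomposes naturally as a two-row skew rectangle $(n-k, m-k)/(r)$ together with a vertical strip of $k$ cells below column $1$. I would stratify each SYT $U$ by the $k$-subset $S \subset \{1, \dots, n+m-k-r\}$ occupying the vertical strip; summing the descent contributions of the column entries over all admissible $S$ yields a factor $q^{k(k-1)/2} \qbc{n+m-k-r}{k}$. The complementary entries form a two-row skew SYT of shape $(n-k, m-k)/(r)$, constrained by $(2, 1) < \min S$, whose $q$-major generating function is computed via a $q$-analogue of the reflection principle (mirroring the proof of Theorem~\ref{thm:schroder}) and produces the bracketed difference of two $q$-binomials.

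\textbf{Main obstacle.} The main technical hurdle lies in Step~2, specifically extracting the mixed coefficient $\frac{q^n[k] + [n][m-r]}{[n][n+1]}$ on the subtracted $q$-binomial. This factor encodes the coupling between $\min S$ and the ``bad'' reflected paths in the two-row enumeration; the denominator $[n][n+1]$ emerges only after a delicate geometric-type summation aggregating the boundary contributions from cell $(2,1)$ and the first entry of $S$. Combining the bijection from Step~1, the factor $q^{k(k-1)/2}\qbc{n+m-k-r}{k}$, and the constrained two-row generating function then yields the right-hand side of Theorem~\ref{thm:Inc}.
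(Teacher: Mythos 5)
There is a genuine gap in Step 1. Your proposed bijection sends $\mathrm{Inc}_k((n,m)/(r))$ onto the single set $\mathrm{SYT}((n-k,m-k,1^k)/(r))$, but this cannot be right when the second row is shorter than the first. In Pechenik's construction the $k$ repeated values are deleted from row $1$ and their right-neighbours in row $2$ are routed into the vertical strip; whether the resulting shape is $(n-k,m-k,1^k)$ or something else depends on whether the last entry $T(2,m)$ of the second row is itself one of the repeated values. In the balanced case $m=n$ column-strictness forbids this, which is why Pechenik lands in a single shape; but for $m<n$ (or $r>0$) it does occur, and those tableaux land in $\mathrm{SYT}((n-k,m-k+1,1^{k-1})/(r))$ instead. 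The paper's Lemma~\ref{lem:InctoSYT} therefore maps onto the \emph{disjoint union} of the two shapes, and a cardinality check on small cases already rules out a bijection onto one of them alone. This is not a cosmetic issue: the coefficient $\frac{q^n[k]+[n][m-r]}{[n][n+1]}$ in Theorem~\ref{thm:Inc} arises precisely from adding the two generating functions (each computed from Lemma~\ref{lem:majofskewSYT}), whereas a single application of that lemma to $(n-k,m-k,1^k)/(r)$ produces a prefactor $q^{k(k+1)/2}$ and a different bracketed expression, so your Step 2, even if carried out correctly for the one shape, would not reproduce the stated right-hand side.

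On Step 2 itself: the paper does not use a reflection-principle or shuffle stratification for the skew three-rowed shapes; it evaluates $\sum_{T\in\mathrm{SYT}((n,m,1^k)/(r))}q^{\mathrm{maj}(T)}$ by combining the Chen--Stanley formula (Theorem~\ref{thm:SHLF}) for the ratio of skew to straight $q$-major sums with the $q$-hook length formula \eqref{equ:HLF}. Your stratification idea might be made to work for a fixed shape, but the factor $q^{k(k-1)/2}\qbc{n+m-k-r}{k}$ you predict does not match the actual generating function of either individual shape, which is further evidence that the missing second component of the bijection is essential.
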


Motivated by Pechenik's bijection, we also study connections between row-increasing tableaux and standard Young tableaux, and we obtain the following result.
\begin{theorem}\label{thm:RinctoSYT}
There is a bijection between $\mathrm{RInc}_k((n,m))$ and $\mathrm{SYT}((n-k+1,m-k+1,1^k)/(1^2))$ that preserves the major index.
\end{theorem}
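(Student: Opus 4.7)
The plan is to construct an explicit bijection $\Phi$ from $\mathrm{RInc}_k((n,m))$ to $\mathrm{SYT}((n-k+1,m-k+1,1^k)/(1^2))$ and then verify that it intertwines the major index statistic on both sides, in the spirit of Pechenik's bijection underlying Theorem~\ref{thm:Inc}.

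I would begin by partitioning the values $\{1, 2, \ldots, n+m-k\}$ of $T$ into three classes: type $A$ (values appearing only in row 1), type $B$ (only in row 2), and type $C$ (the $k$ doubled entries appearing in both rows). A direct count shows these classes have exactly $n-k$, $m-k$, and $k$ elements respectively, matching the cell counts of row 1, row 2, and the column-1 tail of the target L-shape.

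Next I would define $\Phi(T)$ by placing the type-$A$ values into row 1 of the SYT in increasing order, and distributing the type-$B$ and type-$C$ values between row 2 and the tail. The first candidate is the naive assignment — type $B$ into row 2 and type $C$ into the tail, each in increasing order — which yields a valid SYT: the only nontrivial condition $\Phi(T)(1,j) < \Phi(T)(2,j)$ for $j \in \{2,\ldots,m-k+1\}$ reduces to the inequality $\alpha_v \geq \beta_v$, where $\alpha_v$ (resp.\ $\beta_v$) is the number of row-1 (resp.\ row-2) cells of $T$ with entries $\leq v$; this inequality is a direct consequence of the column-weak property of $T$. Invertibility is immediate from this description.

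The main obstacle is the major-index-preservation step. A case analysis shows that the descents of $T$ correspond exactly to pairs of consecutive values with type-pattern in $\{(A,B), (A,C), (C,B), (C,C)\}$, whereas those of the naively constructed SYT correspond to $\{(A,B), (A,C), (B,C), (C,C)\}$; the two sets disagree on the $(C,B)$ versus $(B,C)$ patterns, reflecting the fact that a type-$C$ value in $T$ occupies both rows simultaneously while a tail cell in the L-shape sits strictly below row 2. To repair this, I would refine $\Phi$ by a canonical local exchange: whenever consecutive values $i, i+1$ have types $(C,B)$ or $(B,C)$ in $T$'s type sequence, interchange the SYT positions assigned to $i$ and $i+1$. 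One checks that this exchange preserves the column constraint and transforms a $(C,B)$ descent of $T$ into a $(B,C)$ descent of $\Phi(T)$ contributing the same amount to the major index. The most delicate point, which I anticipate being the main technical step, is handling several interacting $(C,B)/(B,C)$ adjacencies that arise from clusters of doubled and row-2 singleton values; this is likely best handled by an induction on $k$ or on the number of such adjacencies, rather than by an uncontrolled sequence of pointwise swaps.
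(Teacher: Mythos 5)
Your setup is sound: the classification of values into types $A$ (row~1 only), $B$ (row~2 only), $C$ (doubled), the cell counts, the validity of the naive assignment (the column condition does reduce to the prefix inequality $|A_{\le v}|\ge |B_{\le v}|$, which follows from column-weakness), and the identification of the descent discrepancy as exactly the $(C,B)$-versus-$(B,C)$ patterns are all correct. But the repair step, which is the entire substance of the theorem, is not actually carried out, and the rule you propose fails as stated. Consider $T\in\mathrm{RInc}_2((3,3))$ with type sequence $ACBC$, i.e.\ row~1 $=1,2,4$ and row~2 $=2,3,4$. Here $D(T)=\{1,2\}$ and $\mathrm{maj}(T)=3$. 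The naive tableau has $(1,2)=1$, $(2,2)=3$ and tail $2,4$, with $\mathrm{maj}=4$. The adjacencies at positions $2$ and $3$ have patterns $(C,B)$ and $(B,C)$ and overlap in the value~$3$; performing both exchanges, in either order, puts $\{3,4\}$ or $\{2,3\}$ into the tail in decreasing order, which is not a standard tableau. The correct image (the unique tableau of shape $(2,2,1^2)/(1^2)$ with descent set $\{1,2\}$) is obtained by performing only the second exchange: the $(C,B)$ descent at position~$2$ is then realized not as a row-$2$-to-tail descent but as a tail-to-tail descent, contrary to your claim that each $(C,B)$ descent of $T$ becomes a $(B,C)$-type descent of $\Phi(T)$. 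So the local exchanges are neither independent nor descent-preserving one at a time; deciding which exchanges to perform inside a maximal $B/C$-cluster, and proving the net effect on the descent set, is precisely the missing argument, and ``induction on $k$'' is not a substitute for it.

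For comparison, the paper avoids this bookkeeping entirely by composing three bijections, each of which preserves descents for structural reasons: $\rho$ deletes the leftmost doubled column to pass from $\mathrm{RInc}_k((n,m))$ to $\mathrm{Inc}_k((n,m))\cup\mathrm{Inc}_{k-1}((n,m-1))$ (Lemma \ref{lem:InctoRinc}); $\chi$ is the Pechenik-style splitting sending increasing tableaux to standard Young tableaux of four shapes of the form $(n-k+\epsilon_1,m-k+\epsilon_2,1^{k-\epsilon_1-\epsilon_2})$ (Lemma \ref{lem:InctoSYT}); and an inverse jeu de taquin $g^{-1}$ merges those four shapes into the single skew shape $(n-k+1,m-k+1,1^k)/(1^2)$. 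The jdt slides are exactly the ``controlled global rearrangement'' your repair step is groping for: they move entries between row~2 and the column-$1$ tail while provably preserving the major index. If you want to salvage your direct construction, the right move is to show that your naive map followed by the appropriate jdt slides agrees with a well-defined cluster-by-cluster rule; as written, the proof has a genuine gap at its central step.
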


This paper is organized as follows. In Section 2, we give the proof of Theorem 1.1. In section 3, we study the connections between generalized
Schr\"{o}der paths and row-increasing tableaux according to the major index, and give the proof of Theorem 1.2. In Section 4, we give a bijection between $\mathrm{Inc}_k((n,m)/(r))$ and the union of $\mathrm{SYT}((n-k,m-k,1^k)/(r))$ and $\mathrm{SYT}((n-k,m-k+1,1^{k-1})/(r))$, obtaining Theorem 1.3 as a corollary. We also give a bijection between $\mathrm{RInc}_k((n,m))$ and the union of $\mathrm{Inc}_k((n,m))$ and $\mathrm{Inc}_{k-1}((n,m-1))$, which yields the proof of Theorem 1.4.
\section{The major index for \boldmath \texorpdfstring{$\mathrm{Sch}_k(r;n,m)$}{LG}}
About the $q-$binomial coefficients, it is well known that
\begin{equation}
\label{equ:biorec}
\qbc{n}{k}=\qbc{n-1}{k}+q^{n-k}\qbc{n-1}{k-1},
\end{equation}
\begin{equation}\label{equ:q-hockey}
\sum_{s=k}^n q^{s-k}\qbc{s}{k}=\qbc{n+1}{k+1}.
\end{equation}

In \cite[Thm. 1]{Krattenthaler}, Krattenthaler and Mohanty gave a formula for counting generalized Catalan paths by major and descents, which implies the following result as a special case.
\begin{theorem}
\label{thm:catlan}
Let $r$, $n$ and $m$ be positive integers with $r\leq n$ and $m\leq n$. If $E>N$, then we have
\begin{equation*}
\label{equ:cat}
\sum_{P\in \mathrm{Cat}(r;n,m)}q^{\mathrm{maj}(P)}=\qbc{n+m-r}{n-r}-\qbc{n+m-r}{n+1};
\end{equation*}
if $E<N$, then we have
\begin{equation*}
\label{equ:cataamaj1}
\sum_{P\in \mathrm{Cat}(r;n,m)}q^{\mathrm{maj}(P)}=\qbc{n+m-r}{n-r}-q^{r+1}\qbc{n+m-r}{n+1}.
\end{equation*}
\end{theorem}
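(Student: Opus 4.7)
The plan is to derive Theorem~\ref{thm:catlan} by specializing the more general two-variable generating function of Krattenthaler and Mohanty~\cite[Thm.~1]{Krattenthaler}, which refines the enumeration of generalized Catalan paths jointly by the major index and the number of descents. Setting the descent-tracking variable to $1$ and identifying their parameters with our $(r,n,m)$ collapses their expression to the sought univariate sum, after which a short $q$-binomial rearrangement recovers the two stated formulas, one for each ordering $E>N$ or $E<N$.

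As a self-contained alternative I would run a $q$-analogue of the reflection principle. The starting point is the MacMahon identity
\[
\sum_{P}q^{\mathrm{maj}(P)}=\qbc{n+m-r}{n-r},
\]
valid over all lattice paths from $(r,0)$ to $(n,m)$ with east and north steps, for either ordering of the letters. A path is \emph{bad} when it meets the forbidden line $y=x+1$; reflecting its initial segment up to the first contact with that line across $y=x+1$ produces a bijection between bad paths and all lattice paths from $(-1,r+1)$ to $(n,m)$, which contain $n+1$ east steps and $m-r-1$ north steps. When $E>N$ this bijection preserves the aggregate $q$-weighted sum, yielding a bad-path contribution of $\qbc{n+m-r}{n+1}$; when $E<N$ an additional factor of $q^{r+1}$ appears, reflecting the descents forced in the reflected initial segment, and the bad-path contribution becomes $q^{r+1}\qbc{n+m-r}{n+1}$. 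Subtracting either from the unrestricted total gives the stated formulas.

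The main obstacle in this self-contained route is justifying the aggregate $q^{r+1}$ shift in the $E<N$ case, because the reflection bijection does not preserve $\mathrm{maj}$ pointwise: small examples already show that individual reflection pairs can differ in major index while only the total weighted sums agree. This is exactly the book-keeping packaged inside the Krattenthaler--Mohanty refined formula, so I would invoke their result directly and obtain Theorem~\ref{thm:catlan} as a specialization rather than redo the delicate descent combinatorics from scratch.
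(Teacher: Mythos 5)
Your operative proof---specializing the Krattenthaler--Mohanty formula \cite[Thm.~1]{Krattenthaler} for lattice path enumeration by major index and descents---is exactly how the paper justifies Theorem~\ref{thm:catlan} where it is stated, so the proposal is correct and takes essentially the same approach. For completeness: the paper also supplies an independent proof of the $E>N$ case via the bijection $\mathrm{SYT}((n,m)/(r))\to\mathrm{Cat}(r;n,m)$ combined with the Chen--Stanley skew specialization formula and the $q$-hook length formula, which is a genuinely different route from your sketched $q$-reflection argument; your diagnosis that the pointwise failure of $\mathrm{maj}$-preservation under reflection (and hence the aggregate $q^{r+1}$ shift when $E<N$) is the delicate step, best delegated to the cited refined formula, is sound.
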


Note that there is a natural bijection between $\mathrm{RInc}_k((n,m)/(r))$ and $\mathrm{Sch}_k(r;n,m)$: given $T\in \mathrm{RInc}_k((n,m)/(r))$, read the entries of $T$ from $1$ to $n+m-r-k$ in increasing order. Entries appearing only in the first (resp. second) row correspond to the (1,0) (resp. (0,1)) step, and entries appearing in both rows correspond to the (1,1) step. Especially, by restricting the above mapping to $\mathrm{SYT}((n,m)/(r))$, we then obtain a bijection between $\mathrm{SYT}((n,m)/(r))$ and $\mathrm{Cat}(r;n,m)$.

The above bijection indicates that we can deal with enumerative problems of lattice paths via Young tableaux, and vice versa. Here we show an example by giving a new proof of Theorem \ref{thm:catlan} for the case $E>N$. Before that, we need a result of Chen and Stanley, which will also be used in Section 4.

A \emph{reverse tableau} of shape $\mu$ is an array of positive integers of shape $\mu$ that is weakly decreasing in rows and strictly decreasing in columns. Let $\mathrm{RT}(\mu,n)$ denote the set of all reverse tableaux of shape $\mu$ whose entries belong to $\{1,2,\dots,n\}$. Figure \ref{fig:reversetableau} shows all tableaux of $\mathrm{RT}((2,2),3)$.
\begin{figure}[ht]
\begin{minipage}{0.16\linewidth}
\centering
\begin{tikzpicture}[scale=0.6]
\foreach \x in {1,2}
 \foreach \y in {1,2}
 {
  \draw (\x,\y) +(-.5,-.5) rectangle ++(.5,.5);
 }

 \draw (1,1) node{2};
 \draw (1,2) node{3};
 \draw (2,1) node{2};
 \draw (2,2) node{3};
\end{tikzpicture}
\end{minipage}
\begin{minipage}{0.16\linewidth}
\centering
\begin{tikzpicture}[scale=0.6]
\foreach \x in {1,2}
 \foreach \y in {1,2}
 {
  \draw (\x,\y) +(-.5,-.5) rectangle ++(.5,.5);
 }

 \draw (1,1) node{2};
 \draw (1,2) node{3};
 \draw (2,1) node{1};
 \draw (2,2) node{3};
\end{tikzpicture}
\end{minipage}
\begin{minipage}{0.16\linewidth}
\centering
\begin{tikzpicture}[scale=0.6]
\foreach \x in {1,2}
 \foreach \y in {1,2}
 {
  \draw (\x,\y) +(-.5,-.5) rectangle ++(.5,.5);
 }

 \draw (1,1) node{1};
 \draw (1,2) node{3};
 \draw (2,1) node{1};
 \draw (2,2) node{3};
\end{tikzpicture}
\end{minipage}
\begin{minipage}{0.16\linewidth}
\centering
\begin{tikzpicture}[scale=0.6]
\foreach \x in {1,2}
 \foreach \y in {1,2}
 {
  \draw (\x,\y) +(-.5,-.5) rectangle ++(.5,.5);
 }

 \draw (1,1) node{2};
 \draw (1,2) node{3};
 \draw (2,1) node{1};
 \draw (2,2) node{2};
\end{tikzpicture}
\end{minipage}
\begin{minipage}{0.16\linewidth}
\centering
\begin{tikzpicture}[scale=0.6]
\foreach \x in {1,2}
 \foreach \y in {1,2}
 {
  \draw (\x,\y) +(-.5,-.5) rectangle ++(.5,.5);
 }

 \draw (1,1) node{1};
 \draw (1,2) node{3};
 \draw (2,1) node{1};
 \draw (2,2) node{2};
\end{tikzpicture}
\end{minipage}
\begin{minipage}{0.16\linewidth}
\centering
\begin{tikzpicture}[scale=0.6]
\foreach \x in {1,2}
 \foreach \y in {1,2}
 {
  \draw (\x,\y) +(-.5,-.5) rectangle ++(.5,.5);
 }

 \draw (1,1) node{1};
 \draw (1,2) node{2};
 \draw (2,1) node{1};
 \draw (2,2) node{2};
\end{tikzpicture}
\end{minipage}
\caption{All tableaux of $\mathrm{RT}((2,2),3)$.}
\label{fig:reversetableau} 
\end{figure}

Chen and Stanley gave a formula for
$$s_{\lambda/\mu}(1,q,q^2,\dots)/s_{\lambda}(1,q,q^2,\dots),$$
which is equivalent to the following result about the distribution of the major index over standard Young tableaux with skew shapes.
\begin{theorem}\cite[Thm. 1.2]{Chen-Stanley}
\label{thm:SHLF}
Let $\lambda$ and $\mu$ be partitions with $\lambda\supseteq \mu$ and $n\in \mathds{N}$ such that $l(\mu)\leq l(\lambda)\leq n$. Then we have
$$\frac{\sum_{T\in \mathrm{SYT}(\lambda/\mu)}q^{\mathrm{maj}(T)}}{\sum_{T\in \mathrm{SYT}(\lambda)}q^{\mathrm{maj}(T)}}=\frac{[|\lambda/\mu|]!}{[|\lambda|]!}\sum_{S\in \mathrm{RT}(\mu,n)}\prod_{u\in \mu}q^{1-S(u)}[\lambda_{S(u)}-c(u)],$$
where $c(u)=j-i$ for $u=(i,j)$, and $S(u)$ is the entry in the square $u$ in $S$.
\end{theorem}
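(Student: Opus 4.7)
The plan is to reduce the identity to a statement about principal specializations of Schur functions and then establish the latter via the Jacobi--Trudi determinant. By Stanley's principal specialization formula, for any (possibly skew) shape $\nu$ one has
\begin{equation*}
\sum_{T\in\mathrm{SYT}(\nu)}q^{\mathrm{maj}(T)}=(q;q)_{|\nu|}\, s_\nu(1,q,q^2,\ldots),
\end{equation*}
where $(q;q)_k:=\prod_{i=1}^k(1-q^i)=(1-q)^k[k]!$. Applying this identity to both $\nu=\lambda/\mu$ and $\nu=\lambda$, and using $1-q^a=(1-q)[a]$, the statement of the theorem rearranges to the equivalent Schur-function identity
\begin{equation*}
\frac{s_{\lambda/\mu}(1,q,q^2,\ldots)}{s_\lambda(1,q,q^2,\ldots)}=\sum_{S\in\mathrm{RT}(\mu,n)}\prod_{u\in\mu}q^{1-S(u)}\bigl(1-q^{\lambda_{S(u)}-c(u)}\bigr),
\end{equation*}
so it is enough to prove this Schur-ratio identity.

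To do so, I would expand the numerator via the Jacobi--Trudi formula $s_{\lambda/\mu}=\det[h_{\lambda_i-\mu_j-i+j}]_{1\le i,j\le n}$, specialize at $x_k=q^{k-1}$ using $h_r(1,q,q^2,\ldots)=1/(q;q)_r$, and interpret the resulting determinant via the Lindstr\"om--Gessel--Viennot (LGV) lemma as a weighted sum over families of non-intersecting lattice paths. Dividing by the product formula $s_\lambda(1,q,q^2,\ldots)=q^{b(\lambda)}/\prod_{u\in\lambda}(1-q^{h(u)})$ cancels the ``straight-shape'' contribution, leaving a sum indexed by assignments $S:\mu\to\{1,\ldots,n\}$ that are strictly decreasing down columns (from the non-intersection condition on the paths) and weakly decreasing along rows (from $\lambda_i\ge\mu_i$ together with $l(\lambda)\le n$). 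These are exactly the reverse tableaux in $\mathrm{RT}(\mu,n)$.

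The main obstacle will be matching the LGV weight of each surviving path family with the compact product $\prod_{u\in\mu}q^{1-S(u)}(1-q^{\lambda_{S(u)}-c(u)})$. I expect this to follow from a telescoping calculation along each column of $\mu$, in which the factor $1-q^{\lambda_{S(u)}-c(u)}$ arises as the difference between the hook-content contributions of the two boxes of $\lambda$ that are ``skipped'' when $u\in\mu$ is labelled $S(u)$, and $q^{1-S(u)}$ records the initial abscissa of the corresponding path. As a sanity check, setting $q=1$ must reduce the identity to the classical Okounkov--Olshanski formula $|\mathrm{SYT}(\lambda/\mu)|/|\mathrm{SYT}(\lambda)|=(|\lambda/\mu|!/|\lambda|!)\sum_{S}\prod_{u}(\lambda_{S(u)}-c(u))$, of which Theorem~\ref{thm:SHLF} is the natural $q$-analog. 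A potentially cleaner alternative is induction on $|\mu|$: remove an inner corner of $\mu$ at each step using the Pieri/branching rule, with the $|\mu|=1$ case reducing to a routine $q$-binomial manipulation as the base.
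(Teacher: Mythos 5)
The paper does not prove this statement at all: it is imported verbatim from Chen and Stanley \cite[Thm.~1.2]{Chen-Stanley}, and the sentence preceding it already records exactly the equivalence you establish in your first step, namely that the theorem is a reformulation of a formula for $s_{\lambda/\mu}(1,q,q^2,\dots)/s_{\lambda}(1,q,q^2,\dots)$. Your reduction via $\sum_{T\in\mathrm{SYT}(\nu)}q^{\mathrm{maj}(T)}=(q;q)_{|\nu|}\,s_\nu(1,q,q^2,\dots)$ and the bookkeeping $(1-q)^{|\mu|}[a]=\dots$ is correct, and your $q=1$ sanity check (Okounkov--Olshanski) is also correct. But everything after that is a plan rather than a proof, and the step you defer is precisely the entire content of the theorem.

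Concretely: the Jacobi--Trudi/LGV expansion of $s_{\lambda/\mu}(1,q,q^2,\dots)$ indexes the specialization by families of non-intersecting paths, i.e.\ by semistandard tableaux of the \emph{skew} shape $\lambda/\mu$, whereas the right-hand side is a sum over reverse tableaux of the \emph{inner} shape $\mu$. The passage from one indexing to the other is not a column-by-column telescoping; the terms of the reverse-tableau sum do not correspond individually to path families or to constituents of $s_{\lambda/\mu}$. For instance, with $\lambda=(2,2)$ and $\mu=(1)$ one has $s_{(2,2)/(1)}=s_{(2,1)}$, a single term, while $\mathrm{RT}((1),2)$ contributes two nonzero summands $[2]$ and $q^{-1}[2]$ whose combination happens to equal $[4]\,s_{(2,1)}(1,q,\dots)/s_{(2,2)}(1,q,\dots)$; so genuine recombination, not cancellation of ``skipped boxes,'' is taking place. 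At $q=1$ the identity you must prove is the Okounkov--Olshanski formula, which is a nontrivial theorem in its own right (its known proofs go through shifted/factorial Schur functions or delicate bijections), so ``I expect this to follow from a telescoping calculation'' cannot be accepted as a proof step. Your fallback induction on $|\mu|$ has the same problem: removing an inner corner of $\mu$ turns the left side into $s_{\lambda/\mu}/s_{\lambda/\mu'}$, a ratio of two skew specializations that is not of the inductive form, and the Pieri/branching rule grows shapes outward rather than shrinking $\mu$; even the base case $|\mu|=1$ already exhibits the recombination phenomenon above. The honest conclusion is that your write-up reproves the (easy) equivalence the paper already states and leaves the theorem itself unproved; to complete it you would need to supply the actual Chen--Stanley argument or an independent proof of the Schur-ratio identity.
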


Now we can give the proof of Theorem \ref{thm:catlan} for the case $E>N$ as follows. The proof for the case $E<N$ can also be obtained by considering the amajor index of standard Young tableaux.
\begin{proof}
Let $\varphi$ denote the bijection from $\mathrm{SYT}((n,m)/(r))$ to $\mathrm{Cat}(r;n,m)$ given above. It is not difficult to verify that $\varphi$ preserves the descent set for $E>N$. Thus we have
\begin{equation}
\label{equ:cat1}
\sum_{P\in \mathrm{Cat}(r;n,m)}q^{\mathrm{maj}(P)}=\sum_{T\in \mathrm{SYT}((n,m)/(r))}q^{\mathrm{maj}(T)}.
\end{equation}
Let $\lambda$ denote the partition $(n,m)$. Then we obtain from Theorem \ref{thm:SHLF} that
\begin{equation}
\label{equ:cat2}
\begin{aligned}
&\frac{\sum_{T\in \mathrm{SYT}(\lambda/(r))}q^{\mathrm{maj}(T)}}{\sum_{T\in \mathrm{SYT}(\lambda)}q^{\mathrm{maj}(T)}}\\
&= \frac{[n+m-r]!}{[n+m]!}\sum_{S\in \mathrm{RT}((r),2)}\prod_{u\in (r)}q^{1-S(u)}[\lambda_{S(u)}-c(u)]\\
&= \frac{[n+m-r]!}{[n+m]!}\sum_{i=0}^r q^{-i}[m][m-1]\cdots [m-i+1][n-i]\cdots [n-r+1]\\
&= q^{-m}\frac{[n+m-r]![m]![n-m]!}{[n+m]![n-r]!}\sum_{i=0}^r q^{m-i}\qbc{n-i}{n-m}\\
&= q^{-m}\frac{[n+m-r]![m]![n-m]!}{[n+m]![n-r]!}\left(\qbc{n+1}{n-m+1}-\qbc{n-r}{n-m+1}\right),
\end{aligned}
\end{equation}
where the last equality is derived from Equation \eqref{equ:q-hockey}.

Equation \eqref{equ:HLF} implies that
\begin{equation}
\label{equ:cat3}
\sum_{T\in \mathrm{SYT}((n,m))}q^{\mathrm{maj}(T)}=\frac{q^m[n-m+1]}{[n+1]}\qbc{n+m}{n}.
\end{equation}
Combining Equation \eqref{equ:cat1}$\sim$\eqref{equ:cat3} together, we then obtain Theorem \ref{thm:catlan} for the case $E>N$.
\end{proof}

Let $M=\{a_1^{n_1},a_2^{n_2},\dots,a_m^{n_m}\}$ be a multiset containing $n_i$ copies of $a_i$, where $a_1<a_2<\cdots <a_m$ and $n=n_1+n_2+\cdots +n_m$. We denote by $\sigma_M$ the set of all permutations of $M$. Let $\{\sigma_1,\sigma_2,\dots,\sigma_k\}$ be a collection of \emph{complementary} permutations of subsets of $M$, i.e., they are disjoint as subsets and their union equals to $M$. A permutation $\sigma\in \sigma_M$ is called a \emph{shuffle} of $\{\sigma_1,\sigma_2,\dots,\sigma_k\}$ if it contains $\sigma_i\,(1\leq i\leq k)$ as a subword. We denote by $\mathcal{F}(\sigma_1,\sigma_2,\dots,\sigma_k)$ the set of all shuffles of $\{\sigma_1,\sigma_2,\dots,\sigma_k\}$.

For the distribution of the major index over shuffles of permutations of normal sets, Garsia and Gessel gave the following remarkable result, which extended a classical result of MacMahon \cite{MacMahon} and Foata \cite{Foata}.
\begin{theorem}\cite[Thm. 3.1]{Gessel}
\label{lem:gessel}
Let $\{\sigma_1,\sigma_2,\dots,\sigma_k\}$ be a collection of complementary permutations of subsets of $\{1,2,\dots,n\}$. Then we have
\begin{equation*}
\sum_{\sigma\in \mathcal{F}(\sigma_1,\sigma_2,\dots,\sigma_k)}q^{\mathrm{maj}(\sigma)}=\qbc{n}{\mu_1,\mu_2,\dots,\mu_k}q^{\mathrm{maj}(\sigma_1)+\cdots+\mathrm{maj}(\sigma_k)},
\end{equation*}
where $\mu_i$ is the cardinality of $\sigma_i$.
\end{theorem}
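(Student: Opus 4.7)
My plan is to prove the theorem by induction on $k$, reducing to the fundamental case $k=2$. The base case $k=1$ is trivial since $\mathcal{F}(\sigma_1)=\{\sigma_1\}$ and $\qbc{n}{n}=1$. For the inductive step, every shuffle in $\mathcal{F}(\sigma_1,\dots,\sigma_k)$ can be viewed as a shuffle of $\sigma_k$ with some $\pi\in\mathcal{F}(\sigma_1,\dots,\sigma_{k-1})$. Applying the inductive hypothesis to the inner shuffle, the $k=2$ case to the outer one, together with the standard $q$-multinomial identity
\[
\qbc{n}{\mu_1,\dots,\mu_k}=\qbc{n}{\mu_k}\qbc{n-\mu_k}{\mu_1,\dots,\mu_{k-1}}
\]
assembles the full statement. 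Thus it remains to settle $k=2$.

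For $k=2$, let $\sigma_1$ be a permutation of a size-$m$ subset $A\subseteq\{1,\dots,n\}$ and $\sigma_2$ a permutation of $\{1,\dots,n\}\setminus A$. A shuffle $\sigma\in\mathcal{F}(\sigma_1,\sigma_2)$ is specified by the subset $P\subseteq\{1,\dots,n\}$ of size $m$ holding the positions of the letters of $\sigma_1$; write $\sigma_P$ for this shuffle. The target is
\[
\sum_{|P|=m}q^{\mathrm{maj}(\sigma_P)}=\qbc{n}{m}\,q^{\mathrm{maj}(\sigma_1)+\mathrm{maj}(\sigma_2)}.
\]

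My plan is to recognize the left-hand side as a specialization of Stanley's $(P,\omega)$-partition generating function. From $\sigma_i$, construct a labelled chain $(C_i,\omega_i)$ whose unique linear extension recovers $\sigma_i$, with the labelling chosen so that descents along cover relations match descents of $\sigma_i$ as a word. The shuffles of $\sigma_1$ and $\sigma_2$ are then the linear extensions of the disjoint union $(C_1\sqcup C_2,\omega_1\sqcup\omega_2)$. Since Stanley's generating function is multiplicative across disjoint unions and its numerator encodes $\mathrm{maj}$ of linear extensions, one obtains a factorization in which the interleaving choice contributes $\qbc{n}{m}$ and each chain contributes $q^{\mathrm{maj}(\sigma_i)}$.

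The main obstacle is matching Stanley's descent-along-a-cover convention with the ordinary descent set of $\sigma_P$ as a permutation of $\{1,\dots,n\}$: a descent at position $i$ of $\sigma_P$ is either an internal descent of $\sigma_1$ (or $\sigma_2$) shifted by the number of letters from the other permutation lying to its left, or a crossing descent between letters of different origin, and one must verify that summed over $P$ the crossing contributions reproduce $\qbc{n}{m}$ while the internal ones repackage into $q^{\mathrm{maj}(\sigma_1)+\mathrm{maj}(\sigma_2)}$. A more self-contained alternative is a direct induction on $n$: remove the letter $n$ from whichever $\sigma_j$ contains it, isolate the positional contribution of $n$ to $\mathrm{maj}(\sigma_P)$, and close the recursion using the $q$-Pascal identity \eqref{equ:biorec} applied to $\qbc{n}{m}$.
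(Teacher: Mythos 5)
The paper offers no proof of this statement: it is imported verbatim as \cite[Thm.~3.1]{Gessel} (Garsia--Gessel), with only the remark afterwards that it extends to multisets by standardization. So any argument you give is necessarily a different route from the paper's. Your main route is the standard one and is sound: the shuffles of $\sigma_1,\dots,\sigma_k$ are exactly the words of linear extensions of a disjoint union of labelled chains, and Stanley's $(P,\omega)$-partition transfer theorem, $\sum_{e\in\mathcal{L}(P,\omega)}q^{\mathrm{maj}(e)}=(q;q)_n\sum_{f}q^{|f|}$, combined with multiplicativity of the partition generating function over disjoint unions, gives $\qbc{n}{\mu_1,\dots,\mu_k}q^{\sum_i\mathrm{maj}(\sigma_i)}$ directly. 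Two remarks: the ``main obstacle'' you flag is not actually an obstacle, since Stanley's descent set of a linear extension $e$ is by definition the descent set of the word $\omega(e^{-1}(1))\cdots\omega(e^{-1}(n))$, i.e.\ of the shuffle itself, so there is no convention to reconcile; and the $P$-partition argument handles all $k$ simultaneously, so your induction on $k$ (which is itself correct, granting standardization for the inner shuffle) is unnecessary scaffolding.

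Your ``more self-contained alternative,'' however, does not work as described. If you delete the letter $n$ from a shuffle, the two former neighbours of $n$ become adjacent, creating a comparison not present before, so $\mathrm{maj}(\sigma_P)$ is not $\mathrm{maj}(\sigma')$ plus a simple function of the position of $n$. Worse, $n$ cannot be reinserted into all $n$ gaps of $\sigma'$ but only into the gaps lying between the images of its two neighbours inside its own word $\sigma_j$; the fibres of the deletion map therefore have varying sizes, and the increments of $\mathrm{maj}$ over a restricted, contiguous block of gaps do not form an interval $\{0,1,\dots\}$ (in the classical ``insert the largest letter'' lemma the gaps realize the increments $0,1,\dots,n-1$ only when \emph{all} gaps are available, and in a scrambled order). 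So the recursion does not close via the $q$-Pascal identity \eqref{equ:biorec}. Drop that fallback and write out the $(P,\omega)$-partition argument; that is a complete and correct proof.
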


It is easy to see that the above result holds for complementary permutations of subsets of a multiset by standardizing the words.

\begin{example}
For $M=\{1^2,2^3,3^2,4\}$, let $\sigma_1=1313$ and $\sigma_2=2242$ be complementary permutations of subsets of $M$. Let $\sigma=12321423$ be a shuffle of $\sigma_1$ and $\sigma_2$. Let $f(\sigma)=13642857$ be the standardization of $\sigma$. Then $f(\sigma)$ is a shuffle of $\tilde{\sigma_1}=1627$ and $\tilde{\sigma_2}=3485$, and we have  $D(f(\sigma))=D(\sigma)=\{3,4,6\}$.
\end{example}

\begin{proof}[Proof of Theorem \ref{thm:schroder}]
It is obvious that each generalized Schr\"{o}der path of $\mathrm{Sch}_k(r;n,m)$ can be viewed as a shuffle of a generalized Catalan path of $\mathrm{Cat}(r;n-k,m-k)$ and $D^k$. Thus Theorem \ref{thm:schroder} is obtained by combining Theorem \ref{thm:catlan} and \ref{lem:gessel} together.
\end{proof}

\section{The major index and the amajor  index for \boldmath \texorpdfstring{$\mathrm{RInc}_k((n,m)/(r))$}{LG}}
For a generalized Schr\"{o}der path $P=p_1p_2\cdots p_{n}$ and a given linear ordering of $\{E,D,N\}$, let $w$ be a bijection from the set of steps of $P$ to $\{1,2,\dots,n\}$, such that
$$w(p_i)>w(p_j)\Leftrightarrow p_i>p_j,\, \mathrm{or}\, p_i=p_j=D\, \mathrm{and}\, i<j.$$
We then call the word $w(P)=w(p_1)w(p_2)\cdots w(p_n)$ a \emph{diagonal-reverse labelling} of $P$.
\begin{lemma}
\label{lem:labelledschro}
Let $r,n,m$ and $k$ be positive integers with $\mathrm{Sch}_k(r;n,m)\neq \emptyset$. For a given linear ordering of $\{E,D,N\}$ and $P\in \mathrm{Sch}_k(r;n,m)$, let $w(P)$ denote a diagonal-reverse labelling of $P$. If $E>D>N$, then we have
\begin{small}
\begin{equation*}
\label{equ:labelledschro1}
\begin{aligned}
&\sum_{P\in \mathrm{Sch}_k(r;n,m)}q^{\mathrm{maj}(w(P))}\\
&= q^{\frac{k(k-1)}{2}}\qbc{n+m-r-k}{k}\bigg\{\qbc{n+m-r-2k}{n-r-k}-\qbc{n+m-r-2k}{n-k+1}\bigg\};
\end{aligned}
\end{equation*}
\end{small}
if $E<D<N$, then we have
\begin{small}
\begin{equation*}
\label{equ:labelledschro2}
\begin{aligned}
&\sum_{P\in \mathrm{Sch}_k(r;n,m)}q^{\mathrm{maj}(w(P))}\\
&= q^{\frac{k(k-1)}{2}}\qbc{n+m-r-k}{k}\bigg\{\qbc{n+m-r-2k}{n-r-k}-q^{r+1}\qbc{n+m-r-2k}{n-k+1}\bigg\}.
\end{aligned}
\end{equation*}
\end{small}
\end{lemma}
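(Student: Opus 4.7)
The plan is to realize each $P \in \mathrm{Sch}_k(r;n,m)$ as an interleaving of a generalized Catalan path $Q \in \mathrm{Cat}(r;n-k,m-k)$ (obtained by deleting the $D$-steps of $P$) with the constant word $D^k$, and then to apply the Garsia--Gessel shuffle identity (Theorem~\ref{lem:gessel}) together with Theorem~\ref{thm:catlan}. The interleaving is a bijection since inserting $D$-steps preserves $y\le x$ along the path (the difference $y-x$ depends only on the $E$/$N$ subword of any prefix).

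First I examine the structure of $w(P)$ in the case $E > D > N$. The defining conditions of the diagonal-reverse labelling assign: labels $1,2,\ldots,m-k$ to the $N$-steps in their order of appearance, labels $m+1,m+2,\ldots,n+m-r-k$ to the $E$-steps in their order of appearance, and labels $m,m-1,\ldots,m-k+1$ to the $D$-steps in their order of appearance (so the labels on $D$-steps are assigned in reverse). Hence $w(P)$ is a shuffle of two complementary subwords of the alphabet $\{1,\ldots,n+m-r-k\}$: the strictly decreasing word $\sigma_D = m\,(m{-}1)\,\cdots\,(m{-}k{+}1)$ sitting on the $D$-positions, and a word $\sigma_Q$ on the remaining $n+m-r-2k$ positions. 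An entirely analogous analysis applies when $E < D < N$, swapping only the roles of $E$ and $N$; in both cases $\sigma_D$ is strictly decreasing.

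Two observations are then needed. First, since $\sigma_D$ is strictly decreasing of length $k$, every internal position is a descent, giving $\mathrm{maj}(\sigma_D) = 1+2+\cdots+(k-1) = k(k-1)/2$. Second, descents depend only on the relative order of letters, so standardizing $\sigma_Q$ preserves $\mathrm{maj}$, and the standardization is precisely the standard major-index labelling of $Q$ under the induced ordering $E > N$ (respectively $E < N$). In particular, summing over $Q$,
\begin{equation*}
\sum_{Q} q^{\mathrm{maj}(\sigma_Q)} = \sum_{Q \in \mathrm{Cat}(r;n-k,m-k)} q^{\mathrm{maj}(Q)},
\end{equation*}
which is evaluated by Theorem~\ref{thm:catlan} with $n,m$ replaced by $n-k,m-k$.

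Finally, applying Theorem~\ref{lem:gessel} to the complementary pair $(\sigma_D,\sigma_Q)$ of total length $n+m-r-k$ gives
\begin{equation*}
\sum_{P \in \mathrm{Sch}_k(r;n,m)} q^{\mathrm{maj}(w(P))} = \qbc{n+m-r-k}{k}\, q^{k(k-1)/2}\sum_{Q \in \mathrm{Cat}(r;n-k,m-k)} q^{\mathrm{maj}(Q)},
\end{equation*}
and substituting the two cases of Theorem~\ref{thm:catlan} yields the two claimed formulas. The main thing to verify carefully is that the diagonal-reverse convention really does factor $w(P)$ into a strictly decreasing $\sigma_D$ and a subword $\sigma_Q$ whose standardization is the standard descent labelling of $Q$; once this bookkeeping is in place, everything else is a direct application of the shuffle formula.
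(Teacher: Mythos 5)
Your proof is correct and follows essentially the same route as the paper: the paper likewise views each $P\in\mathrm{Sch}_k(r;n,m)$ as a shuffle of a path in $\mathrm{Cat}(r;n-k,m-k)$ with a strictly decreasing word of $k$ distinct diagonal labels (contributing $\mathrm{maj}=k(k-1)/2$), checks that this relabelling preserves the descent set of $w(P)$, and then combines Theorem~\ref{thm:catlan} with the Garsia--Gessel shuffle formula. The only difference is presentational: the paper encodes the decreasing diagonal subword via a map $\psi$ replacing the $i$-th $D$ by a new letter $D_{k-i+1}$, whereas you work directly with the integer labels.
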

\begin{proof}
We just give the proof for $E>D>N$, while the proof for $E<D<N$ is almost the same. Let $W=D_kD_{k-1}\cdots D_1$ be a word with $E<D_1<D_2<\cdots <D_k<N$. We denote by $\mathcal{F}(\mathrm{Cat}(r;n,m),W)$ the set of all shuffles of $W$ with lattice paths in $\mathrm{Cat}(r;n,m)$.

Given $P\in \mathrm{Sch}_{k}(r;n,m)$, let $\psi(P)$ denote the word obtained from $P$ by replacing the $i-$th $D$ with $D_{k-i+1}$ for $1\leq i\leq k$. It is not difficult to see that $\psi$ gives a bijection from $\mathrm{Sch}_{k}(r;n,m)$ to $\mathcal{F}(\mathrm{Cat}(r,n-k,m-k),W)$. Moreover, the bijection $\psi$ satisfies $D(w(P))=D(\psi(P))$. Thus we obtain from Theorem \ref{thm:catlan} and \ref{lem:gessel} that
\begin{equation*}
\begin{aligned}
&\sum_{P\in \mathrm{Sch}_k(r;n,m)}q^{\mathrm{maj}(w(P))}\\
&= \sum_{\psi(P)\in \mathcal{F}(\mathrm{Cat}(r,n-k,m-k),W)}q^{\mathrm{maj}(\psi(P))}\\
&= q^{\frac{k(k-1)}{2}}\qbc{n+m-r-k}{k}\bigg\{\qbc{n+m-r-2k}{n-r-k}-\qbc{n+m-r-2k}{n-k+1}\bigg\}.
\end{aligned}
\end{equation*}
\end{proof}

\begin{proof}[Proof of Theorem \ref{thm:RInc}]
Let $\varphi$ denote the bijection from $\mathrm{RInc}_{k}((n,m)/(r))$ to $\mathrm{Sch}_k(r;n,m)$ given in Section 2. For a given linear ordering of $\{E,D,N\}$ and $P\in \mathrm{Sch}_k(r;n,m)$, let $w(P)$ denote a diagonal-reverse labelling of $P$. Then for any $T\in \mathrm{RInc}_k((n,m)/(r))$, we have
$$\mathrm{maj}(w(\varphi(T)))=\mathrm{maj}(T),\,\mathrm{if}\, E>D>N,$$
and
$$\mathrm{maj}(w(\varphi(T)))=\mathrm{amaj}(T),\,\mathrm{if}\, E<D<N.$$
In fact, if $\varphi(T)=p_1p_2\cdots p_{n+m-r-k}$, then for $E>D>N$, we have $i$ a descent of $w(\varphi(T))$ if and only if the pair $(p_i, p_{i+1})$ equals to $(D,N)$, $(E,N)$, $(E,D)$ or $(D,D)$. In all cases, $i$ is a descent of $T$. The discussion for $E<D<N$ is similar. Thus Theorem \ref{thm:RInc} is derived from Lemma \ref{lem:labelledschro}.
\end{proof}

Note that equations of Theorem \ref{thm:schroder} differ from equations of Theorem \ref{thm:RInc} by the factor $q^{k(k-1)/2}$. The above proof shows that the factor $q^{k(k-1)/2}$ arises from diagonal-reverse labelling of Schr\"{o}der paths, and thus gives an explanation for the question of Du, Fan and Zhao.

\section{The major index for \boldmath \texorpdfstring{$\mathrm{Inc}_k((n,m)/(r))$}{LG}}
We firstly extend a bijection of Pechenik \cite[Prop. 2.1]{Pechenik} as follows.
\begin{lemma}
\label{lem:InctoSYT}
There is an explicit bijection between $\mathrm{Inc}_k((n,m)/(r))$ and the union of $\mathrm{SYT}((n-k,m-k,1^k)/(r))$ and $\mathrm{SYT}((n-k,m-k+1,1^{k-1})/(r))$ that preserves the major index.
\end{lemma}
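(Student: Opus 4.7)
The plan is to generalize Pechenik's bijection \cite[Prop.~2.1]{Pechenik}, which handles the straight square case $r=0$, $n=m$, to the full skew two-row case. For $T \in \mathrm{Inc}_k((n,m)/(r))$, the entries form the set $\{1, 2, \ldots, n+m-r-k\}$, with exactly $k$ values occurring twice. Call these repeated values $s_1 < s_2 < \cdots < s_k$. Row-strictness and column-strictness force each $s_i$ to appear once in row $1$ and once in row $2$, with the row-$2$ occurrence strictly to the left of the row-$1$ occurrence; this "staircase" pattern of duplicates is the structural input to the bijection.

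The forward map $\Phi$ should deduplicate $T$ by extracting the $k$ repeated values and arranging them as a vertical strip beneath the shape. Concretely, I would delete one copy of each $s_i$ from $T$, re-standardize the remaining two-row skew tableau, and adjoin the sequence $s_1 < \cdots < s_k$ as a column hanging below row $2$. A case analysis on $T$ determines the target shape: the generic case yields a SYT of shape $(n-k,m-k,1^k)/(r)$, while an exceptional case (applicable precisely when the target partition $(n-k,m-k+1,1^{k-1})$ is itself valid, i.e., when $n \geq m+1$) yields a SYT of shape $(n-k,m-k+1,1^{k-1})/(r)$. The most natural criterion separating the two cases is the row-$2$ column position of the largest repeated value $s_k$: if $s_k$ lies in the leftmost cell of row $2$, its row-$1$ partner lands in the vertical tail; otherwise it is absorbed into a longer second row. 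The inverse map is defined by reading the repeated values off the bottom strip and reinserting each one in the unique row-$1$ position compatible with the standardized two-row part and the increasing condition.

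The verification decomposes into three parts: $\Phi$ is well-defined, $\Phi$ is invertible, and $\Phi$ preserves the major index. The first two are bookkeeping, tracking row lengths of the output and the uniqueness of reinsertion in each case. The main obstacle is major index preservation: I need to show that for each consecutive pair $(i, i+1)$, the descent condition (whether $i+1$ lies in a strictly lower row than $i$) agrees between $T$ and $\Phi(T)$. Pairs involving two non-repeated values are immediate, since such entries retain their rows under $\Phi$ up to the overall reshuffling into the hook. The delicate situations are descents involving some $s_j$, which occupies two rows of $T$ but only one row of $\Phi(T)$; here the crucial observation is that the row-$2$ occurrence of $s_j$ in $T$ controls its descent relationship with $s_j - 1$, while the row-$1$ occurrence controls its relationship with $s_j + 1$, and after $\Phi$ places $s_j$ into the vertical strip (which sits strictly below row $2$ of the rectangular part), both comparisons collapse to the same conclusion because the strip is "lower than everything." A small additional check is needed at the boundary between the strip and row $2$ in the exceptional case, but this is handled by the same row-comparison argument.
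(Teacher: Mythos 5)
Your high-level plan---extend Pechenik's deduplication-into-a-hook bijection to the skew two-row case---is exactly the paper's, but the map you actually describe is not Pechenik's and does not work. The paper's $\chi$ does \emph{not} hang the repeated values $A=\{s_1,\dots,s_k\}$ below the shape: it deletes $A$ from row~1, deletes from row~2 the set $B$ of entries sitting immediately to the \emph{right} of an element of $A$ in row~2, and appends $B$ (not $A$) to the first column; the two target shapes are separated by whether the last entry $T(2,m)$ of row~2 is itself repeated (in which case $|B|=k-1$). Your version breaks on both points you flag as needing verification, already on the paper's own example $T_1\in\mathrm{Inc}_2((5,4)/(2))$ of Figure~3 (row~1 $=1,2,4$; row~2 $=2,3,4,5$; $A=\{2,4\}$). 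Well-definedness: removing both copies of $2$ and $4$ and hanging $2,4$ under the first column places $3$ (the new first entry of row~2) above $2$ in that column, so the output is not column-strict; and if instead you delete only one copy of each $s_i$, the output has the wrong number of cells and still contains repeats. Major-index preservation: your claim that "the strip is lower than everything, so both comparisons collapse to the same conclusion" fails whenever $s_j-1$ occurs only in row~2: then $s_j-1\notin D(T)$ (the row-2 copy of $s_j$ is in the same row as $s_j-1$ and the row-1 copy is higher), but once $s_j$ sits in the strip, $s_j-1$ becomes a descent. Here $s_2=4$ and $3$ occurs only in row~2, so $\mathrm{maj}(T_1)=1+2+4=7$ while your image would have $\mathrm{maj}=10$. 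Your case criterion is also off: both target shapes occur for the same parameters (Figure~3 exhibits one tableau of each type with $n=5$, $m=4$, $k=2$, $r=2$), so the split cannot be the global condition "$(n-k,m-k+1,1^{k-1})$ is a valid partition," nor is it governed by the leftmost cell of row~2; it is governed by $T(2,m)$. The repair is precisely to hang $B$ instead of $A$: since $T(2,1)\notin B$, every element of $B$ exceeds the entry remaining at the top of the first column, so column-strictness survives, and an element of $B$ placed strictly below row~2 reproduces exactly the descent contributions that its left-neighbor $s_i$'s two copies created in $T$.
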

\begin{proof}
Given $T\in \mathrm{Inc}_k((n,m)/(r))$, let $A$ be the set of numbers that appear twice in $T$. Let $B$ be the set of numbers that appear in the second row immediately right of an element of $A$. Then $\chi(T)$ is a Young tableau produced by the following algorithm. We firstly delete all elements of $A$ from the first row of $T$ and all elements of $B$ from the second row, and obtain $\chi(T)$ by appending $B$ to the first column. Then $\chi(T)\in \mathrm{SYT}((n-k,m-k,1^k))$ if $T(2,m)$ appears only in the second row, and $\chi(T)\in \mathrm{SYT}((n-k,m-k+1,1^{k-1}))$ otherwise. See Figure \ref{fig:InctoSYT} for an example of $\chi$.
\begin{figure}[ht]
\centering
\begin{minipage}{0.75\linewidth}
\begin{tikzpicture}[scale=0.6]
\draw (0,1.5) node{$T_1:$};
\foreach \x in {2,...,5}
 \foreach \y in {1}
 {
  \draw (\x,\y) +(-.5,-.5) rectangle ++(.5,.5);
 }
 \foreach \x in {4,...,6}
 \foreach \y in {2}
 {
  \draw (\x,\y) +(-.5,-.5) rectangle ++(.5,.5);
 }
 \draw (4,2) node{1};
 \draw[red] (5,2) node{2};
 \draw[red] (6,2) node{4};
 \draw[red] (2,1) node{2};
 \draw[blue] (3,1) node{3};
 \draw[red] (4,1) node{4};
 \draw[blue] (5,1) node{5};

\path[->] (7.5,1.5) edge node [above] {$\chi$} (9,1.5);
\draw (11,1.5) node{$\chi(T_1):$};
 \foreach \x in {13}
 \foreach \y in {1,2,3}
 {
  \draw (\x,\y) +(-.5,-.5) rectangle ++(.5,.5);
 }
 \draw (14,3) +(-.5,-.5) rectangle ++(.5,.5);
 \draw (15,4) +(-.5,-.5) rectangle ++(.5,.5);
 \draw (13,1) node{5};
 \draw (13,2) node{3};
 \draw (13,3) node{2};
 \draw (14,3) node{4};
 \draw (15,4) node{1};
\end{tikzpicture}
\end{minipage}\\
\vspace{6pt}
\begin{minipage}{0.75\linewidth}
\begin{tikzpicture}[scale=0.6]
\draw (0,1.5) node{$T_2:$};
\foreach \x in {2,...,5}
 \foreach \y in {1}
 {
  \draw (\x,\y) +(-.5,-.5) rectangle ++(.5,.5);
 }
 \foreach \x in {4,...,6}
 \foreach \y in {2}
 {
  \draw (\x,\y) +(-.5,-.5) rectangle ++(.5,.5);
 }
 \draw (4,2) node{1};
 \draw[red] (5,2) node{2};
 \draw[red] (6,2) node{5};
 \draw[red] (2,1) node{2};
 \draw[blue] (3,1) node{3};
 \draw (4,1) node{4};
 \draw[red] (5,1) node{5};

\path[->] (7.5,1.5) edge node [above] {$\chi$} (9,1.5);
\draw (11,1.5) node{$\chi(T_2):$};
 \foreach \x in {13}
 \foreach \y in {1,2}
 {
  \draw (\x,\y) +(-.5,-.5) rectangle ++(.5,.5);
 }
 \draw (14,2) +(-.5,-.5) rectangle ++(.5,.5);
 \draw (15,2) +(-.5,-.5) rectangle ++(.5,.5);
 \draw (15,3) +(-.5,-.5) rectangle ++(.5,.5);
 \draw (13,1) node{3};
 \draw (13,2) node{2};
 \draw (14,2) node{4};
 \draw (15,2) node{5};
 \draw (15,3) node{1};
\end{tikzpicture}
\end{minipage}
\caption{An example of $\chi$ with $T_1,T_2\in \mathrm{Inc}_2((5,4)/(2))$. The entries appearing twice are colored red, and the entries appearing in the second row immediately right of a red number are colored blue.}
\label{fig:InctoSYT} 
\end{figure}

It is not hard to see that $\chi$ preserves the descent set and the major index. Thus we just need to show that $\chi$ is reversible.
For any $S$ in the union of $\mathrm{SYT}((n-k,m-k,1^k)/(r))$ and $\mathrm{SYT}((n-k,m-k+1,1^{k-1})/(r))$, let $B$ be the set of entries below the second row. By deleting entries below the second row and inserting $B$ into the second row of $S$ while maintaining increasingness, we obtain a tableau $T_1$. Let $A$ be the set of numbers immediately left of an element of $B$ in the second row of $T_1$. If $S\in\mathrm{SYT}((n-k,m-k,1^k)/(r))$, then $\chi^{-1}(S)$ is obtained by inserting $A$ into the first row of $T_1$ while maintaining increasingness. If $S\in \mathrm{SYT}((n-k,m-k+1,1^{k-1})/(r))$, then $\chi^{-1}(S)$ is obtained by inserting $A$ and $T_1(2,m)$ into the first row of $T_1$ while maintaining increasingness.
\end{proof}

The following result gives the distribution of the major index over standard Young tableaux with shape $(n,m,1^k)/(r)$.
\begin{lemma}
\label{lem:majofskewSYT}
For positive integers $r,n,m$ and $k$ with $\mathrm{SYT}((n,m,1^k)/(r))\neq \emptyset$, we have
\begin{equation*}
\label{equ:425}
\begin{aligned}
\sum_{T\in\mathrm{SYT}((n,m,1^k)/(r))}&q^{\mathrm{maj}(T)}=q^{\frac{k(k+1)}{2}}\qbc{n+m+k-r}{k}\\
&\bm{\cdot}\bigg(\frac{[m]}{[m+k]}\qbc{n+m-r}{m}-\frac{[n+1]}{[n+k+1]}\qbc{n+m-r}{n+1}\bigg).
\end{aligned}
\end{equation*}
\end{lemma}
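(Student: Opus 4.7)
The plan is to apply the Chen--Stanley formula (Theorem~\ref{thm:SHLF}) with $\lambda=(n,m,1^k)$ and $\mu=(r)$, taking the bound $N=l(\lambda)=k+2$. Since $\mu=(r)$ is a single row, $\mathrm{RT}((r),k+2)$ consists of weakly decreasing sequences $(s_1,\ldots,s_r)\in\{1,\ldots,k+2\}^r$, and for the cell $u=(1,j)$ we have $c(u)=j-1$, so the weight reduces to $\prod_{j=1}^{r} q^{1-s_j}[\lambda_{s_j}-j+1]$.

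The first step is to identify the contributing sequences. For $s_j\geq 3$ we have $\lambda_{s_j}=1$, so the bracket $[2-j]$ vanishes whenever $j\geq 2$. Together with the weakly decreasing condition, this splits the sum into three families: (i) $s_1=\cdots=s_r=1$; (ii) $s_1=2$, with positions $1,\ldots,a$ equal to $2$ and the rest equal to $1$ for some $1\leq a\leq r$; and (iii) $s_1=v\in\{3,\ldots,k+2\}$, with positions $2,\ldots,a+1$ equal to $2$ and positions $a+2,\ldots,r$ equal to $1$ for some $0\leq a\leq r-1$. The sum over $v$ in family (iii) collapses to $\sum_{v=3}^{k+2} q^{1-v}=q^{-k-1}[k]$, and the remaining inner sums over $a$ in families (ii) and (iii) can be resolved using the $q$-hockey stick identity~\eqref{equ:q-hockey}.

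Independently, I would evaluate $\sum_{T\in\mathrm{SYT}(\lambda)}q^{\mathrm{maj}(T)}$ via the $q$-hook length formula~\eqref{equ:HLF}. For $\lambda=(n,m,1^k)$ a direct computation of the hook lengths yields the denominator product $[n+k+1]\cdot[m+k]\cdot[k]!\cdot[m-1]!\cdot [n-m]!\cdot [n]!/[n-m+1]!$, while $b(\lambda)=m+k+\binom{k+1}{2}$, which is exactly the source of the $q^{k(k+1)/2}$ prefactor in the target. Plugging these pieces back into Theorem~\ref{thm:SHLF} together with the case-sums yields the skew generating function.

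The main obstacle will be the algebraic cleanup: the raw output of Theorem~\ref{thm:SHLF} is a linear combination of three case-sums that must be repackaged as $q^{k(k+1)/2}\qbc{n+m+k-r}{k}$ times the difference $\tfrac{[m]}{[m+k]}\qbc{n+m-r}{m}-\tfrac{[n+1]}{[n+k+1]}\qbc{n+m-r}{n+1}$. To keep this tractable I would factor $q^{k(k+1)/2}\qbc{n+m+k-r}{k}$ out at the earliest opportunity and reduce to two smaller identities---one matching the $[m]/[m+k]$ summand and one matching the $[n+1]/[n+k+1]$ summand---each structurally mirroring the two terms of Theorem~\ref{thm:catlan}, whose two-row analogue was already handled in Section~2.
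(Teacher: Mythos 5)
Your proposal follows essentially the same route as the paper's proof: apply the Chen--Stanley formula with $\mu=(r)$ and $N=k+2$, observe that the weakly decreasing condition together with $\lambda_{s_j}=1$ for $s_j\geq 3$ forces all entries after the first to be at most $2$, evaluate the resulting case-sums via the $q$-hockey stick identity \eqref{equ:q-hockey}, and divide by the $q$-hook length evaluation of $\sum_{T\in\mathrm{SYT}((n,m,1^k))}q^{\mathrm{maj}(T)}$ (your hook-length product and $b(\lambda)=m+\frac{k(k+3)}{2}$ both match Equation \eqref{equ:424}). The only cosmetic difference is that you split the nonvanishing reverse tableaux into three families where the paper merges your families (ii) and (iii) by factoring the first-cell contribution $q^{-1}[m]+q^{-k-1}[k]$ out of a common tail sum.
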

\begin{proof}
Let $\lambda$ denote the partition $(n,m,1^k)$. By Theorem \ref{thm:SHLF}, we have
\begin{small}
\begin{equation}
\label{equ:421}
\begin{aligned}
\frac{\sum_{T\in \mathrm{SYT}(\lambda/(r))}q^{\mathrm{maj}(T)}}{\sum_{T\in \mathrm{SYT}(\lambda)}q^{\mathrm{maj}(T)}}& = \frac{[n+m+k-r]!}{[n+m+k]!}\\
&\bm{\cdot}\sum_{S\in \mathrm{RT}((r),k+2)}\prod_{u\in (r)}q^{1-S(u)}[\lambda_{S(u)}-c(u)].
\end{aligned}
\end{equation}
\end{small}

Given $S\in \mathrm{RT}((r),k+2)$, if $\lambda_{S(u_0)}<c(u_0)$ for some $u_0\in (r)$, then by \cite[Equation 2.19]{Chen-Stanley}, we have
$$\prod_{u\in (r)}q^{1-S(u)}[\lambda_{S(u)}-c(u)]=0.$$
Thus we can assume $S(1,i)\leq 2$ for $2\leq i \leq r$, and divide the sum of the right-hand side of Equation \eqref{equ:421} into two parts:
\begin{equation}
\label{equ:422}
\begin{aligned}
\sum_{S(1,1)=1}\prod_{u\in (r)}q^{1-S(u)}[\lambda_{S(u)}-c(u)]=\frac{[n]!}{[n-r]!},
\end{aligned}
\end{equation}
and
\begin{equation}
\label{equ:423}
\begin{aligned}
&\sum_{S(1,1)\geq 2}\prod_{u\in (r)}q^{1-S(u)}[\lambda^{1}_{S(u)}-c(u)]\\
&= \big(\frac{[m]}{q}+\sum_{i=2}^{k+1}\frac{1}{q^i}\big) \bm{\cdot}\bigg\{\frac{[m-1]![n-m]!}{q^{m-1}[n-r]!}\sum_{i=0}^{r-1}\qbc{n-i-1}{n-m}q^{m-i-1}\bigg\}\\
&= \frac{[m+k][m-1]![n-m]!}{q^{m+k}[n-r]!}\bigg(\qbc{n}{n-m+1}-\qbc{n-r}{n-m+1}\bigg),
\end{aligned}
\end{equation}
where the last identity is derived from Equation \eqref{equ:q-hockey}.

Equation \eqref{equ:HLF} implies that
\begin{small}
\begin{equation}
\label{equ:424}
\sum_{T\in \mathrm{SYT}((n,m,1^k))}q^{\mathrm{maj}(T)}=q^{m+\frac{k(k+3)}{2}}\frac{[n-m+1]}{[n+k+1]}\qbc{m+k-1}{k}\qbc{n+m+k}{n}.
\end{equation}
\end{small}
Combining Equation \eqref{equ:421} $\thicksim$ \eqref{equ:424} together, we then obtain Lemma \ref{lem:majofskewSYT}.
\end{proof}

Combining Lemma \ref{lem:InctoSYT} and \ref{lem:majofskewSYT}, we then obtain Theorem \ref{thm:Inc}. As an application, we can give another proof of Theorem \ref{thm:RInc} for the special case when $r=0$ by generalizing a bijection given in \cite[Thm. 6]{Du-Fan-Zhao} as follows.
\begin{lemma}
\label{lem:InctoRinc}
There is an explicit bijection between $\mathrm{RInc}_k((n,m))$ and the union of $\mathrm{Inc}_k((n,m))$ and $\mathrm{Inc}_{k-1}((n,m-1))$ that preserves the major index.
\end{lemma}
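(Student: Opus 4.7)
The plan is to define an explicit bijection $\phi: \mathrm{RInc}_k((n,m)) \to \mathrm{Inc}_k((n,m)) \cup \mathrm{Inc}_{k-1}((n,m-1))$. If $T\in\mathrm{RInc}_k((n,m))$ already has strictly increasing columns, set $\phi(T)=T$. Otherwise, let $v_0$ be the smallest value such that $T(1,j_0)=T(2,j_0)=v_0$ for some column $j_0$, and define $\phi(T)$ by deleting $T(2,j_0)=v_0$ from row 2 and shifting the remaining entries of row 2 one step to the left.

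First I would check that $\phi(T)\in\mathrm{Inc}_{k-1}((n,m-1))$. The shape becomes $(n,m-1)$; the entry set stays $\{1,\dots,n+m-k\}$ since $v_0$ remains in row 1; rows remain strictly increasing; columns $j<j_0$ are already strict in $T$ (otherwise a smaller same-column duplicate would contradict minimality of $v_0$), while for $j\geq j_0$ the new column reads $T(1,j)\leq T(2,j)<T(2,j+1)$. For the inverse: if $S$ has shape $(n,m)$ set $\phi^{-1}(S)=S$; otherwise let $j_0$ be the largest column satisfying $S(1,j_0)>S(2,j_0-1)$ (with the convention $S(2,0):=0$) and insert $v_0:=S(1,j_0)$ into row 2 at column $j_0$. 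Maximality of $j_0$ gives $S(1,j)\leq S(2,j-1)$ for every $j>j_0$, which is exactly what is needed for the reconstructed columns to the right of $j_0$ to be weakly increasing, and $v_0$ falls strictly between $S(2,j_0-1)$ and $S(2,j_0)$, so row 2 remains strictly increasing. Tracing through both directions shows that $j_0$ corresponds to the column of the smallest same-column duplicate of the reconstructed tableau, so $\phi$ and $\phi^{-1}$ are mutually inverse.

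The core of the argument is the major-index preservation. Since row 1 of $T$ is unchanged and the row assignment of every value other than $v_0$ is preserved, the only descents that can possibly flip are at $i=v_0-1$ and $i=v_0$. The descent at $i=v_0$ is present iff $v_0+1$ lies in row 2, both before and after the deletion, so it is unaffected. The crucial claim is that $v_0-1$ cannot appear in row 1 of $T$: if $T(1,j')=v_0-1$ then $j'<j_0$ by strict row 1, and column $j'$ is strict in $T$ by minimality of $v_0$, forcing $T(2,j')\geq v_0$; but also $T(2,j')<T(2,j_0)=v_0$ by strict row 2, a contradiction. Hence $v_0-1$ is either absent or appears only in row 2, in which case it is never a descent either before or after the surgery, and $\mathrm{maj}$ is preserved.

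The main obstacle is the minimality-based argument that forbids $v_0-1$ from row 1, since this is precisely what guarantees invariance of the descent set under the surgery; the remaining steps amount to bookkeeping about shape, strictness, and explicit insertion/deletion.
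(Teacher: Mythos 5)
Your proof is correct and follows essentially the same route as the paper's: delete the second-row copy of the entry in the smallest repeated column (equivalently, the smallest repeated value, since row $1$ is strictly increasing) and shift row $2$ left, with preservation of the major index resting on the observation that $v_0-1$ cannot occur in row $1$. The paper's inverse locates the insertion column as the largest $i$ with $S(2,i)=S(1,i+1)-1$ rather than your largest $j_0$ with $S(1,j_0)>S(2,j_0-1)$, but these conditions select the same column, so the two constructions agree.
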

\begin{proof}
Given $T\in \mathrm{RInc}_k((n,m))$, we obtain the tableau $\rho(T)$ by the following algorithm. If $T\in \mathrm{Inc}_k((n,m))$, then $\rho(T)=T$. Otherwise, let $i$ be the minimal integer such that $T(1,i)=T(2,i)$.Then $\rho(T)$ is produced by firstly deleting $T(2,i)$ and then moving $T(2,j)$ one box to the left for $i<j\leq m$. For the later case, since $i$ is minimal, $T(2,i)-1$ appears only in the second row of $T$ or $i=1$, which implies that $T(2,i)-1$ is not a descent of $T$ and $\rho(T)$. Thus the operation $\rho$ preserves the descent set and the major index (see Fig. \ref{fig:RInctoInc}).
\begin{figure}[ht]
\centering
\begin{minipage}{0.8\linewidth}
\begin{tikzpicture}[scale=0.6]
\draw (0,1.5) node{$T:$};
\foreach \x in {2,...,5}
 \foreach \y in {1}
 {
  \draw (\x,\y) +(-.5,-.5) rectangle ++(.5,.5);
 }
 \foreach \x in {2,...,6}
 \foreach \y in {2}
 {
  \draw (\x,\y) +(-.5,-.5) rectangle ++(.5,.5);
 }
 \draw (2,2) node{1};
 \draw (3,2) node{2};
 \draw[red] (4,2) node{4};
 \draw (5,2) node{5};
 \draw (6,2) node{6};

 \draw (2,1) node{2};
 \draw (3,1) node{3};
 \draw[blue] (4,1) node{4};
 \draw (5,1) node{6};

\path[->] (7.5,1.5) edge node [above] {$\rho$} (9,1.5);
\draw (11,1.5) node{$\rho(T):$};
\foreach \x in {13,...,15}
 \foreach \y in {1}
 {
  \draw (\x,\y) +(-.5,-.5) rectangle ++(.5,.5);
 }
 \foreach \x in {13,...,17}
 \foreach \y in {2}
 {
  \draw (\x,\y) +(-.5,-.5) rectangle ++(.5,.5);
 }
 \draw (13,2) node{1};
 \draw (14,2) node{2};
 \draw (15,2) node{4};
 \draw (16,2) node{5};
 \draw (17,2) node{6};

 \draw (13,1) node{2};
 \draw (14,1) node{3};
 \draw (15,1) node{6};
\end{tikzpicture}
\end{minipage}
\caption{An example of $\rho$ with $T\in \mathrm{RInc}_3((5,4))\backslash\mathrm{Inc}_3((5,4))$.}
\label{fig:RInctoInc} 
\end{figure}

It is not hard to see that
$$\rho(T)\in \mathrm{Inc}_k((n,m))\cup \mathrm{Inc}_{k-1}((n,m-1)).$$
Therefore we just need to show that $\rho$ is reversible. If $T\in \mathrm{Inc}_k((n,m))$, we have $\rho^{-1}(T)=T$. If $T\in \mathrm{Inc}_{k-1}((n,m-1))$, let $i$ be the maximal integer such that $T(2,i)=T(1,i+1)-1$, where we assume $T(2,0)=0$. Then $\rho^{-1}(T)$ is obtained from $T$ by firstly moving $T(2,j)$ one box to the right for $i<j\leq m-1$, and then setting $T(2,i+1)=T(1,i+1)$.
\end{proof}

Applying the bijection in Lemma \ref{lem:InctoRinc} to Theorem \ref{thm:Inc}, we then obtain an alternate proof of Theorem \ref{thm:RInc} for the special case when $r=0$.
\begin{corollary}
For positive integers $n,m$ and $k$ with $\mathrm{RInc}_k((n,m))\neq \emptyset$, we have
\begin{small}
\begin{equation*}
\sum_{T\in \mathrm{RInc}_k((n,m))}q^{\mathrm{maj}(T)}=q^{m+\frac{k(k-3)}{2}}\frac{[n-m+1]}{[n-k+1]}\qbc{n+m-k}{k}\qbc{n+m-2k}{m-k}.
\end{equation*}
\end{small}
\end{corollary}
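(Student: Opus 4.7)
The plan is to combine the bijection of Lemma \ref{lem:InctoRinc} with the $r=0$ case of Theorem \ref{thm:Inc}. Since Lemma \ref{lem:InctoRinc} is major-index-preserving and sends $\mathrm{RInc}_k((n,m))$ bijectively onto $\mathrm{Inc}_k((n,m))\sqcup\mathrm{Inc}_{k-1}((n,m-1))$, one immediately obtains
\begin{equation*}
\sum_{T\in\mathrm{RInc}_k((n,m))}q^{\mathrm{maj}(T)}
=\sum_{T\in\mathrm{Inc}_k((n,m))}q^{\mathrm{maj}(T)}
+\sum_{T\in\mathrm{Inc}_{k-1}((n,m-1))}q^{\mathrm{maj}(T)},
\end{equation*}
and each summand on the right is evaluated by Theorem \ref{thm:Inc} with $r=0$, the first at parameters $(n,m,k)$ and the second at $(n,m-1,k-1)$.

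The remainder of the argument is an algebraic simplification. First I would factor out the common power $q^{(k-1)(k-2)/2}$, observing that $k(k-1)/2 - (k-1)(k-2)/2 = k-1$. For the first summand I would then use the duality $\qbc{n+m-2k}{n-k}=\qbc{n+m-2k}{m-k}$ to collapse both terms of its parenthesis onto a single $q$-binomial. For the second summand I would apply the $q$-Pascal recursion \eqref{equ:biorec} to rewrite
\begin{equation*}
\qbc{n+m-2k+1}{m-k}=\qbc{n+m-2k}{m-k}+q^{n-k+1}\qbc{n+m-2k}{m-k-1},
\end{equation*}
and similarly $\qbc{n+m-2k+1}{n-k+1}=\qbc{n+m-2k}{m-k-1}+q^{m-k}\qbc{n+m-2k}{m-k}$, so that every $q$-binomial appearing has the common upper index $n+m-2k$. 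The identity $\qbc{n+m-2k}{m-k-1}=\tfrac{[m-k]}{[n-k+1]}\qbc{n+m-2k}{m-k}$ then reduces the whole expression to a scalar multiple of $\qbc{n+m-k}{k}\qbc{n+m-2k}{m-k}$, and the problem is to show that scalar equals $q^{m+k(k-3)/2}\tfrac{[n-m+1]}{[n-k+1]}$.

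The main obstacle will be this final scalar verification, which mixes the cumbersome coefficients $\tfrac{q^n[k]+[n][m]}{[n][n+1]}$ and $\tfrac{q^n[k-1]+[n][m-1]}{[n][n+1]}$ with the binomial ratio $\qbc{n+m-k}{k-1}/\qbc{n+m-k}{k}=\tfrac{[k]}{[n+m-2k+1]}$. I expect the key simplifications to come from the identities $[a+1]=q[a]+1$, which relates $[k]$ to $[k-1]$ and $[m]$ to $[m-1]$, together with $[a]-[b]=q^{b}[a-b]$ applied in the forms $[n][n+1]-[n][m]=q^{m}[n][n-m+1]$ and $[n-k+1]-[m-k]=q^{m-k}[n-m+1]$. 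The factor $\tfrac{[n-m+1]}{[n-k+1]}$ in the target formula will arise precisely from the combination of the $[n-m+1]$ produced by these telescopings and the $[n-k+1]$ introduced in the final reduction of $\qbc{n+m-2k}{m-k-1}$; the residual power of $q$ will be $q^{(k-1)(k-2)/2}\cdot q^{m-1}=q^{m+k(k-3)/2}$, matching the exponent stated in the corollary.
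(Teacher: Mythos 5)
Your proposal is correct and follows exactly the paper's route: the paper likewise deduces the corollary by applying the bijection of Lemma \ref{lem:InctoRinc} to split the sum over $\mathrm{RInc}_k((n,m))$ into sums over $\mathrm{Inc}_k((n,m))$ and $\mathrm{Inc}_{k-1}((n,m-1))$, each evaluated by Theorem \ref{thm:Inc} with $r=0$. The paper omits the algebraic simplification entirely, whereas your outline of it (the common factor $q^{(k-1)(k-2)/2}$, the $q$-Pascal and ratio identities, and the telescoping via $[a]-[b]=q^b[a-b]$) is accurate and does reduce to the stated scalar $q^{m+k(k-3)/2}[n-m+1]/[n-k+1]$.
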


\emph{Jeu de taquin} (jdt) is a well-known transformation among skew Young tableaux. Readers can see \cite[Ch. 7, App. I]{Stanley} for the detailed definition. We show an example of the jdt transformation in Figure \ref{fig:jdt}, where bold numbers denote the entries moved during the transformation.

\begin{figure}[ht]
\centering
\begin{minipage}{1\linewidth}
\centering
\begin{tikzpicture}[scale=0.6]
\draw (0.5,2) node{$T=$};
\foreach \x in {2,3}
 \foreach \y in {1}
 {
  \draw (\x,\y) +(-.5,-.5) rectangle ++(.5,.5);
 }
 \foreach \x in {2,3,4}
 \foreach \y in {2}
 {
  \draw (\x,\y) +(-.5,-.5) rectangle ++(.5,.5);
 }
  \foreach \x in {3,4,5}
 \foreach \y in {3}
 {
  \draw (\x,\y) +(-.5,-.5) rectangle ++(.5,.5);
 }

 \draw (2,3) node{$a$};
 \draw (3,3) node{1};
 \draw (4,3) node{3};
 \draw (5,3) node{8};
 \draw (2,2) node{2};
 \draw (3,2) node{4};
 \draw (4,2) node{7};
 \draw (2,1) node{5};
 \draw (3,1) node{6};
 \draw (4,1) node{$b$};
\end{tikzpicture}
\end{minipage}\\
\vspace{15pt}
\begin{minipage}{0.49\linewidth}
\centering
\begin{tikzpicture}[scale=0.6]
\draw (0.5,2) node{$\mathrm{jdt}_a(T)=$};
\foreach \x in {3,4}
 \foreach \y in {1}
 {
  \draw (\x,\y) +(-.5,-.5) rectangle ++(.5,.5);
 }
 \foreach \x in {3,4}
 \foreach \y in {2}
 {
  \draw (\x,\y) +(-.5,-.5) rectangle ++(.5,.5);
 }
  \foreach \x in {3,4,5,6}
 \foreach \y in {3}
 {
  \draw (\x,\y) +(-.5,-.5) rectangle ++(.5,.5);
 }

 \draw (3,3) node{\textbf{1}};
 \draw (4,3) node{\textbf{3}};
 \draw (5,3) node{\textbf{7}};
 \draw (6,3) node{8};
 \draw (3,2) node{2};
 \draw (4,2) node{4};
 \draw (3,1) node{5};
 \draw (4,1) node{6};
\end{tikzpicture}
\end{minipage}
\begin{minipage}{0.49\linewidth}
\centering
\begin{tikzpicture}[scale=0.6]
\draw (0.5,2) node{$\mathrm{jdt}_b(T)=$};
\foreach \x in {3,4,5}
 \foreach \y in {1}
 {
  \draw (\x,\y) +(-.5,-.5) rectangle ++(.5,.5);
 }
 \foreach \x in {4,5}
 \foreach \y in {2}
 {
  \draw (\x,\y) +(-.5,-.5) rectangle ++(.5,.5);
 }
  \foreach \x in {4,5,6}
 \foreach \y in {3}
 {
  \draw (\x,\y) +(-.5,-.5) rectangle ++(.5,.5);
 }

 \draw (4,3) node{1};
 \draw (5,3) node{3};
 \draw (6,3) node{8};
 \draw (4,2) node{\textbf{2}};
 \draw (5,2) node{\textbf{4}};
 \draw (3,1) node{5};
 \draw (4,1) node{6};
 \draw (5,1) node{\textbf{7}};
\end{tikzpicture}
\end{minipage}
\caption{An example of the jdt transformation.}
\label{fig:jdt} 
\end{figure}

In \cite[Lem. 2.2]{Adin}, the jdt transformation is used to give a bijection between $\mathrm{SYT}((n-k+m,k)/(m))$ and the union $\bigsqcup_{d=k-m}^{\mathrm{min}\{k,n-k\}}\mathrm{SYT}(n-d,d)$. Here we can apply the jdt transformation to $\mathrm{SYT}((n-k+1,m-k+1,1^k)/(1^2))$ in a similar way, and obtain Theorem \ref{thm:RinctoSYT} as a corollary.
\begin{proof}[Proof of Theorem \ref{thm:RinctoSYT}]
We firstly give a bijection from $\mathrm{SYT}((n-k+1,m-k+1,1^k)/(1^2))$ to the union of $\mathrm{SYT}((n-k,m-k,1^k))$, $\mathrm{SYT}((n-k,m-k+1,1^{k-1}))$, $\mathrm{SYT}((n-k+1,m-k,1^{k-1}))$ and $\mathrm{SYT}((n-k+1,m-k+1,1^{k-2}))$. Given $T\in \mathrm{SYT}((n-k+1,m-k+1,1^k)/(1^2))$, let $a=(1,1)$ and $b=(2,1)$ denote the two boxes beside the northwest corner of $T$. Then the tableau $g(T)$ is defined to be $\mathrm{jdt}_a(\mathrm{jdt}_b(T))$. If $T_1=\mathrm{jdt}_b(T)\in\mathrm{SYT}((n-k+1,m-k,1^k)/(1))$, then we have $T_1(2,i)>T_1(1,i+1)$ for $1\leq i\leq m-k$, which implies that $g(T)\in \mathrm{SYT}((n-k,m-k,1^k))$. Otherwise, $g(T)$ belongs to the union of $\mathrm{SYT}((n-k,m-k+1,1^{k-1}))$, $\mathrm{SYT}((n-k+1,m-k,1^{k-1}))$ and $\mathrm{SYT}((n-k+1,m-k+1,1^{k-2}))$. See Figure \ref{fig:exampleofg} for an example of $g$.

\begin{figure}[ht]
\centering
\begin{tikzpicture}[scale=0.6]
\draw (0.5,2) node{$T:$};
\draw (2,0.5) +(-.5,-.5) rectangle ++(.5,.5);
\draw (2,1.5) +(-.5,-.5) rectangle ++(.5,.5);
\foreach \x in {3,4}
 \foreach \y in {2.5}
 {
  \draw (\x,\y) +(-.5,-.5) rectangle ++(.5,.5);
 }
 \foreach \x in {3,4,5}
 \foreach \y in {3.5}
 {
  \draw (\x,\y) +(-.5,-.5) rectangle ++(.5,.5);
 }
 \draw (2,0.5) node{6};
 \draw (2,1.5) node{2};
 \draw (2,2.5) node{$b$};
 \draw (3,2.5) node{3};
 \draw (4,2.5) node{7};
 \draw (2,3.5) node{$a$};
 \draw (3,3.5) node{1};
 \draw (4,3.5) node{4};
 \draw (5,3.5) node{5};
\path[->] (6,2) edge node [above] {$\mathrm{jdt}_b$} (8,2);
\draw (9,1) +(-.5,-.5) rectangle ++(.5,.5);
\foreach \x in {9,10,11}
 \foreach \y in {2}
 {
  \draw (\x,\y) +(-.5,-.5) rectangle ++(.5,.5);
 }
 \foreach \x in {10,11,12}
 \foreach \y in {3}
 {
  \draw (\x,\y) +(-.5,-.5) rectangle ++(.5,.5);
 }
 \draw (9,1) node{6};
 \draw (9,2) node{2};
 \draw (10,2) node{3};
 \draw (11,2) node{7};
 \draw (9,3) node{$a$};
 \draw (10,3) node{1};
 \draw (11,3) node{4};
 \draw (12,3) node{5};
\path[->] (13,2) edge node [above] {$\mathrm{jdt}_a$} (15,2);
\draw (16,2) node{$g(T):$};
\draw (18,1) +(-.5,-.5) rectangle ++(.5,.5);
\foreach \x in {18,19}
 \foreach \y in {2}
 {
  \draw (\x,\y) +(-.5,-.5) rectangle ++(.5,.5);
 }
 \foreach \x in {18,...,21}
 \foreach \y in {3}
 {
  \draw (\x,\y) +(-.5,-.5) rectangle ++(.5,.5);
 }
 \draw (18,1) node{6};
 \draw (18,2) node{2};
 \draw (19,2) node{7};
 \draw (18,3) node{1};
 \draw (19,3) node{3};
 \draw (20,3) node{4};
 \draw (21,3) node{5};
\end{tikzpicture}
\caption{An example of $g$ with $T\in \mathrm{SYT}((4,3,1^2)/(1^2))$.}
\label{fig:exampleofg} 
\end{figure}

By the definition of jdt, it is easy to check that $g$ preserves the major index. We now construct the reverse of $g$ as follows. Given $S\in\mathrm{SYT}((n-k,m-k,1^k))$, let $a=(1,n-k+1)$ and $b=(2,m-k+1)$. Then we have $g^{-1}(S)=\mathrm{jdt}_b(\mathrm{jdt}_a(S))$. The construction of $g^{-1}$ for other cases is similar.

Combining Lemma \ref{lem:InctoSYT} and \ref{lem:InctoRinc} together, we know that $\chi\circ\rho$ is a bijection from $\mathrm{RInc}_k((n,m))$ to the union of $\mathrm{SYT}((n-k,m-k,1^k))$, $\mathrm{SYT}((n-k,m-k+1,1^{k-1}))$, $\mathrm{SYT}((n-k+1,m-k,1^{k-1}))$ and $\mathrm{SYT}((n-k+1,m-k+1,1^{k-2}))$. Thus $g^{-1}\circ\chi\circ\rho$ gives the required bijection from $\mathrm{RInc}_k((n,m))$ to $\mathrm{SYT}((n-k+1,m-k+1,1^k)/(1^2))$ that preserves the major index.
\end{proof}
\section{Acknowledgements}
We greatly appreciate the referees for their valuable comments and helpful suggestions adopted in this revised version.

\end{document}